\documentclass[11pt,twoside]{amsart}

\usepackage{amsfonts}
\usepackage{hyperref}
\usepackage{etoolbox}
\usepackage{amsmath}
\usepackage{amssymb}
\usepackage{amsthm}
\usepackage{dsfont}
\usepackage{pifont}
\usepackage{mathrsfs}
\usepackage{graphicx}
\usepackage{enumerate}
\usepackage[all]{xy}
\usepackage{geometry}
\usepackage{amsfonts}
\usepackage{fancyhdr}
\usepackage{tikz}
\usetikzlibrary{arrows}
\usepackage{lscape}
\usetikzlibrary{tikzmark,calc}

\usepackage{amsaddr}

\usepackage[backend=biber,style=alphabetic,sorting=nty,maxnames=5]{biblatex}
\addbibresource{references.bib}

\newcommand{\Aa}{\mathcal{A}}
\newcommand{\Bb}{\mathcal{B}}
\newcommand{\Cc}{\mathcal{C}}
\newcommand{\Dd}{\mathcal{D}}
\newcommand{\Ee}{\mathcal{E}}

\newcommand{\Gg}{\mathcal{G}}
\newcommand{\Hh}{\mathcal{H}}

\newcommand{\Xx}{\mathcal{X}}


\newcommand{\C}{\mathbb{C}}
\newcommand{\F}{\mathbb{F}}

\newcommand{\Q}{\mathbb{Q}}

\newcommand{\Z}{\mathbb{Z}}


\DeclareMathOperator\Tor{Tor}
\DeclareMathOperator\ho{Hom}

\DeclareMathOperator\GL{GL}

\DeclareMathOperator\Ob{Ob}

\DeclareMathOperator\md{md}
\DeclareMathOperator\lcm{lcm}


\newcommand{\Set}{\mathrm{\textbf{Set}}}

\newcommand{\modu}{\mathrm{\textbf{mod}}}

\newcommand{\Ab}{\mathrm{\textbf{Ab}}}

\newcommand{\bigslant}[2]{{\raisebox{.2em}{$#1$}\left/\raisebox{-.2em}{$#2$}\right.}}
\newcommand{\xdownarrow}[1]{%
  {\left\downarrow\vbox to #1{}\right.\kern-\nulldelimiterspace}
}


\newcommand{\infl}{\ar@{{(}->}}
\newcommand{\defl}{\ar@{->>}}

\newcommand{\intv}[1]{[\![#1]\!]}
\newcommand{\muls}[1]{ \left\{\kern-0.6em \left\{ #1\right\}\kern-0.6em \right\} }

\newcommand{\epsi}{\varepsilon}

\newcommand{\spa}{\vspace*{1ex}}

\newcommand{\hhat}[1]{\widehat{#1}}

\newcommand{\nit}[1]{\textbf{\emph{#1}}}

\theoremstyle{plain}
\newtheorem{prop}{Proposition}[section]
\newtheorem{prop-def}[prop]{Proposition-Definition}
\newtheorem{lem}[prop]{Lemma}
\newtheorem{theo}[prop]{Theorem}
\newtheorem{cor}[prop]{Corollary}
\newtheorem{definition}[prop]{Definition}

\newtheorem*{theo*}{Theorem}

\theoremstyle{remark}
\newtheorem{rem}[prop]{Remark}
\newtheorem{exemple}[prop]{Example}

\setlength{\textwidth}{15.3cm} \setlength{\textheight}{24.2cm}
\setlength{\topmargin}{-1.5cm} \setlength{\oddsidemargin}{-1mm}
\setlength{\evensidemargin}{-1mm}
\setlength{\abovedisplayskip}{3mm}
\setlength{\belowdisplayskip}{3mm}
\setlength{\abovedisplayshortskip}{0mm}
\setlength{\belowdisplayshortskip}{2mm} \normalbaselines
\raggedbottom

\title[Dehornoy-Lafont order complex for categories]{Generalization of the Dehornoy-Lafont order complex to categories. Application to exceptional braid groups}

\author{Owen Garnier}
\address{LAMFA, Université de Picardie Jules Verne, CNRS UMR 7352,\\ 33, rue Saint-Leu, 80000, Amiens, France.}
\email{o.garnier@u-picardie.fr}
\date{\today}
\subjclass[2020]{Primary 20J06 and 18G35 ; Secondary 20F36 and 20F55}
\keywords{Garside Category, Gaussian Category, Homology, Complex braid groups}

\begin{document}

\begin{abstract}
The homology of a Garside monoid, thus of a Garside group, can be computed efficiently through the use of the order complex defined by Dehornoy and Lafont. We construct a categorical generalization of this complex and we give some computational techniques which are useful for reducing computing time.

We then use this construction to complete results of Salvetti, Callegaro and Marin regarding the homology of exceptional complex braid groups. We most notably study the case of the Borchardt braid group $B(G_{31})$ through its associated Garside category.
\end{abstract}

\maketitle
\tableofcontents

\section*{Introduction}


The question of finding suitable ways to compute the homology of specific classes of groups (in particular by finding computationally efficient resolutions of the trivial module) is a broad field of study. The particular case of the classical braid group (as defined by Artin in \cite{artin}) was first studied by Arnold in \cite{arn2} using methods of algebraid topology (more precisely Alexander duality). His work was then adapted to the more general case of braid groups associated to Coxeter groups (i.e spherical Artin groups). This is summarized in the survey \cite{vershinin}.


On the other hand, the combinatorial behavior of the classical braid group was first studied by F. Garside in \cite{garthesis}, in which the author gives a solution to both the word problem and the conjugacy problem. These methods were later generalized to other spherical Artin groups \cite{brieskornsaito}, \cite{deligneimmeuble}, \cite{bestvina}, \cite{adjan}, \cite{elrifaimorton}, and then to the class of \emph{Garside groups} \cite{dehpar}, \cite{picantincentre}, \cite{centralizergarside}, \cite{godellepara}, \cite{rigidity}, into what is now called Garside theory. This theory was notably proved in \cite{dehlaf} and \cite{cmw} to provide convenient resolutions for homology computations.

Moreover, before the coining of the term Garside group by Dehornoy and Paris, the combinatorial properties of Artin groups had already been used in \cite{salvetti} and \cite{squier} to provide resolutions and homology computations. The approach of the latter was then generalized in \cite[Section 4]{dehlaf} to a class of \emph{Gaussian monoids}, which encompasses Garside monoids.

Afterwards, near the end of the 2000's, Garside theory began to be further developed into the theory of Garside categories  \cite{kgarcat}, \cite{besgar}. This culminated in the publication in 2015 of the reference book \cite{ddgkm}, which summarizes the state of the art and the adaptations of Garside theory to a categorical context. Among these adaptations, the first resolution defined in \cite{dehlaf} is adapted to categories. However, the second resolution -the \emph{order complex}- although briefly mentioned, is not explicitly adapted to the case of a category. Furthermore, the constructions of free modules over a category and of the homology of categories are not detailed in depths. 

In this paper we directly address these two problems. We first show, following the arguments in \cite{squier}, that the homology of a category coincides with that of its enveloping groupoid under suitable assumptions. As we actually plan to compute group homology, we also state that the homology of a group is the same as the homology of a groupoid to which it is equivalent. We obtain the following theorem. For the concept of a left-Ore category, see Definition \ref{leftore}.

\begin{theo*}(Corollary \ref{cor2.16} and Proposition \ref{groupoidgroup})\newline
Let $\Cc$ be a left-Ore category, and let $\Gg$ be the groupoid of fractions of $\Cc$. For every $\Cc$-module $M$ we have $H_*(\Cc,M)=H_*(\Gg,\Z\Gg\otimes_\Cc M)$. Furthermore, if $G$ is a group, then every equivalence of categories $G\to \Gg$ induces a $G$-module structure $M'$ on $M$, and we have $H_*(\Cc,M)=H_*(G,M')$.
\end{theo*}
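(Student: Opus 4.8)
The statement is a conjunction of two logically independent facts, which I would treat separately. Throughout, abbreviate $R=\Z\Cc$ and $S=\Z\Gg$ for the associated category rings, so that a $\Cc$-module is a left $R$-module, the constant module is a right $R$-module $\Z$, and $H_*(\Cc,M)=\Tor^R_*(\Z,M)$ (and likewise $H_*(\Gg,N)=\Tor^S_*(\Z,N)$). The canonical functor $\iota\colon\Cc\to\Gg$ makes $S$ into an $(S,R)$-bimodule, and by definition $\Z\Gg\otimes_\Cc M=S\otimes_R M$ as a left $S$-module. With this dictionary, the first assertion (Corollary \ref{cor2.16}) is an instance of flat base change for $\Tor$, and the whole problem concentrates in a single algebraic input: \emph{$S=\Z\Gg$ is flat as a right $R=\Z\Cc$-module}.

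This flatness is where the left-Ore hypothesis of Definition \ref{leftore} is used essentially, and I expect it to be the main obstacle, since it is the only step where the Ore axioms genuinely enter and it requires a careful object-by-object bookkeeping of fractions in the categorical (rather than one-object) setting. The plan mirrors the classical case of an Ore localization of a ring: the calculus of left fractions lets one put finitely many elements of $\Z\Gg$ over a common denominator, and the resulting system of denominators is directed, so that $\Z\Gg$ is realized as a filtered colimit of free right $\Z\Cc$-modules (one copy of the relevant representable per denominator, with transition maps given by right multiplication by morphisms of $\Cc$). As a filtered colimit of flat modules is flat, $\Z\Gg$ is flat on the right. Granting this, choose a projective resolution $Q_\bullet\to M$ by left $R$-modules; extension of scalars $S\otimes_R-$ sends projectives to projectives and, by flatness, is exact, so $S\otimes_R Q_\bullet\to S\otimes_R M$ is a projective resolution of $\Z\Gg\otimes_\Cc M$ over $S$. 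Because $\Cc$ and $\Gg$ have the same objects, the constant right $S$-module $\Z$ restricts along $\iota$ to the constant right $R$-module, giving the associativity isomorphism $\Z\otimes_S(S\otimes_R Q_\bullet)\cong\Z\otimes_R Q_\bullet$. Taking homology yields $\Tor^S_*(\Z,S\otimes_R M)\cong\Tor^R_*(\Z,M)$, which is precisely $H_*(\Gg,\Z\Gg\otimes_\Cc M)=H_*(\Cc,M)$.

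For the second assertion (Proposition \ref{groupoidgroup}), I would invoke the invariance of category homology under equivalences, which is formal. Viewing the group $G$ as a one-object groupoid, an equivalence $F\colon G\to\Gg$ induces by restriction a $G$-module $M'=F^*(\Z\Gg\otimes_\Cc M)$; concretely $M'$ is the abelian group $(\Z\Gg\otimes_\Cc M)(F(\ast))$ equipped with the $G$-action $g\mapsto(\Z\Gg\otimes_\Cc M)(F(g))$. Since an equivalence of categories induces an isomorphism on homology with the correspondingly pulled-back coefficients, one gets $H_*(\Gg,\Z\Gg\otimes_\Cc M)=H_*(G,M')$, and combining this with the first part gives $H_*(\Cc,M)=H_*(G,M')$. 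The only point needing verification here is that $F^*$ carries the coefficient module to the asserted $G$-module structure, which is immediate from the definition of the induced module.
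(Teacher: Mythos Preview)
Your proposal is correct and follows essentially the same route as the paper. The paper likewise reduces the first assertion to right-flatness of $\Z\Gg$ over $\Z\Cc$, proved (objectwise) by writing $\Z\Gg(x,-)$ as a directed colimit of representables via common denominators, and then invokes flat base change for $\Tor$; your explicit version (resolve $M$, use that $S\otimes_R-$ preserves projectives and exactness, and the associativity $\Z\otimes_S S\cong\Z$) is just an unpacking of that same step. For the second assertion the paper argues exactly as you do, using that an equivalence $G\simeq\Gg$ induces an exact equivalence of module categories and hence identifies the two $\Tor$'s.
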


We then construct a categorical analogue of the order complex of Dehornoy and Lafont. To do this we introduce a notion of (locally) Gaussian category, which generalizes the notion of Garside category, as considered in \cite{besgar}. The definitions are direct adaptations of \cite[Section 4]{dehlaf} to the categorical context. The main difference is that we now keep track of the sources and targets of morphisms and of the cells. Such a point of view is implicitly present in the original article of Dehornoy and Lafont, where some diagrams already represent elements of a monoid by arrows, suggesting a categorical viewpoint on these constructions. We get the same result as in \cite{dehlaf}: the complex we obtain is a free resolution of the trivial module over a locally left-Gaussian category (Proposition \ref{2.15}). As the definition of the order complex is dependent on some additional choice, we also give a practical method for handling this choice, in order to reduce the number of cells. 

Since Garside categories are a particular case of Gaussian categories, this theoretically applies to a vast family of examples. For instance, finite index subgroups of Garside groups and centralizer of periodic elements in Garside groups are naturally associated to some Garside category (which is not a Garside monoid a priori). However, the size of these structure, along with the recursive nature of the differential of the order complex, makes it really hard to perform actual computations. This is where the method we propose for reducing the number of cells may be used most effectively.

As a concrete application of our construction, we compute the homology of complex braid groups with coefficients in various modules. Recall from \cite[Section 2]{bmr} that a complex braid group $B(W)$ is defined as the fundamental group of the space of regular orbits of some complex reflection group $W\leqslant \GL_n(\C)$. We can restrict our attention to irreducible groups without loss of generality. The classification of irreducible complex reflection groups was done in 1954 by Shephard and Todd (see \cite{shetod}): there is an infinite series $G(de,e,n)$ with three integer parameters, and 34 exceptional cases, labeled $G_4,\ldots,G_{37}$. We restrict our attention to the braid groups of these exceptional groups.

Over time, almost all exceptional braid groups have been shown to admit a Garside group structure (see for instance \cite[Section 8]{beskpi1}, \cite[Examples 11, 12, 13]{thespicantin}, \cite[Example 1]{dehpar}). The only exception (among exceptional braid groups) is the Borchardt braid group $B(G_{31})$, which is equivalent to the groupoid of fractions of a Garside category, instead of being the group of fractions of some Garside monoid. 

The Garside monoid structures of the other exceptional braid groups have already been used in \cite{homcomp1} and \cite{homcomp2} to compute homological results. The lack of a categorical version of the order complex of Derhornoy and Lafont proved to be a problem in computing the homology of the Borchardt braid group, as the only resolution available at that time afforded far too many cells for most explicit computations: only the integral homology of the Borchardt braid group could be computed in \cite{homcomp1}, using other tools like Betti numbers and reduction modulo $p$. Our adaptation of the order complex to categories allows us to complete results from \cite{homcomp1} and \cite{homcomp2} regarding this group. Furthermore, we also apply our methods of reducing the number of cells to other exceptional groups in order to give comparisons with the results in \cite{homcomp2}, where the author used the order complex defined for Garside monoids.

The paper is organized as follows. In Section 1, we review the notion of module over a category, and, in particular, the construction of a free module over a category. We also prove that the homology of a category satisfying the (left-)Ore condition coincides with the homology of its enveloping groupoid, and that the homology of a groupoid is the same as the homology of a group to which it is equivalent. In Section 2, we introduce the notion of a Gaussian category, and our generalization of the order complex to these categories, which provides an explicit free and finite resolution of the trivial module. We also give a practical procedure for reducing computations in practice. In Section 3, we apply this construction to various Garside monoids and categories associated to exceptional complex braid groups. In particular we compute the homology of the Borchardt braid groups with coefficients in the set of Laurent polynomials over the rationals. We also apply our method to reduce computations to give comparison with earlier results of \cite{homcomp1} and \cite{homcomp2}. Lastly, as we encounter the same computational problems as in \cite{homcomp2} regarding the computation of the homology of $B(G_{34})$ with coefficient in the set of Laurent polynomials over the rationals, we compute some results in finite fields in order to obtain a conjecture on this particular homology.

%

\addtocontents{toc}{\protect\setcounter{tocdepth}{1}}

\subsection*{Acknowledgments.} The computational results for the braid groups of $G_{24},G_{27},G_{29},G_{31},$ $G_{33}$ and $G_{34}$ were obtained using the MatriCS platform of the Université de Picardie Jules Verne in Amiens, France. I thank Étienne Piskorski, Laurent Renault and Jean-Baptiste Hoock for their help in using it. I also thank my PhD thesis advisor Ivan Marin for his precious help.

\addtocontents{toc}{\protect\setcounter{tocdepth}{4}}

\section{Homology of a Category}\label{homcat}
In order to define an analogue of the Dehornoy-Lafont order complex for categories, one first needs to define the homology of a category in general. In particular we need notions of resolutions, free modules over a category and tensor product of modules over a category. Then we need, just as for monoids, to give a relation between the homology of a category and the homology of its enveloping groupoid under suitable assumptions.

In accordance with the convention of \cite{ddgkm} regarding composition of arrows, the composition of the diagram 
\[\xymatrix{x\ar[r]^-f & y\ar[r]^-g & z}\] 
will be denoted $fg$. In the same vein, the categories that we will consider are assumed to be small categories. If $\Cc$ denotes a category, then the set of objects of $\Cc$ will be denoted by $\Ob(\Cc)$, and the set of morphisms from an object $x$ to an object $y$ will be denoted by $\Cc(x,y)$.

\subsection{Modules over a category}\label{modcat}
The representation theory of categories appears for instance in the study of quivers and correspondence functors (see \cite[Section 2]{boucthevenaz1} and \cite{webb}). For convenience, we provide here a definition of modules and free modules over a category.

\begin{definition} Let $\Cc$ be a category. A $\Z\Cc$-\nit{module} (or $\Cc$-module) is a contravariant functor from $\Cc$ to the category $\Ab$ of abelian groups. Equivalently, a $\Z\Cc$-module is given by a contravariant additive functor $\Z\Cc\to \Ab$.
\end{definition}

Let $M$ be a $\Z\Cc$-module. Every $x\in \Ob(\Cc)$ is mapped to an abelian group $M_x$. A morphism $f:x\to y$ induces a morphism $M(f):M_y\to M_x$. For $a\in M_y$, we denote $f.a$ instead of $M(f)(a)$. This element lies in $M_x$.

Let $g\in \Cc(y,z)$. By definition of a contravariant functor, and because of our convention for composition of arrows in $\Cc$ (which is different from the composition in $\Ab$), we have $M(fg)=M(f)\circ M(g)$. In our notation for $M(f)$, we get
\[\forall a\in M_z,~~f.(g.a)=(fg).a.\]

\begin{exemple}
One can always consider $\Z$ as a trivial $\Cc$-module by considering the functor mapping every object to $\Z$ and every morphism to the identity in $\Z$.
\end{exemple}

\begin{rem}
Another point of view, usually adopted in the representation theory of quivers, is to consider the algebra $A$ generated as a ring by all morphisms in $\Cc$, and with relations
\[fg=\begin{cases} f\circ g&\text{if the source of $g$ is the target of $f$}\\ 0 &\text{otherwise}\end{cases}\]
This point of view is in fact equivalent to ours. Indeed, considering a $\Cc$-module $M$, the abelian group $\bigoplus_{x\in \Ob(\Cc)} M_x$ naturally comes equipped with a $A$ module structure.
\end{rem}

\begin{prop-def}
We denote by $\Z\Cc-\modu$ the category of $\Z\Cc$-modules, where the morphisms are the natural transformations between functors (if $M,N$ are $\Cc$-modules, we denote by $\ho_\Cc(M,N)$ the corresponding set of morphisms in $\Z\Cc-\modu$). Since $\Ab$ is an abelian category, the category $\Z\Cc-\modu$ is an abelian category.
\end{prop-def}

As we want to consider free resolutions in the category $\Z\Cc-\modu$, we first define a notion of free module over $\Cc$. Consider the category $\Set^{\Ob(\Cc)}$ of families of sets indexed by objects of $\Cc$. The morphisms between two families $\{S_x\}_{x\in \Ob(\Cc)}$ and $\{T_x\}_{x\in \Ob(\Cc)}$ are given by families of (set-theoretic) maps $\{\varphi_x:S_x\to T_x\}_{x\in \Ob(\Cc)}$.

The category $\Z\Cc-\modu$ is endowed with a forgetful functor to $\Set^{\Ob(\Cc)}$, sending a functor $M$ to the family $\{M_x\}_{x\in \Ob(\Cc)}$. We construct a free functor, adjoint to this forgetful functor.

Let $S:=\{S_x\}_{x\in \Ob(\Cc)}$ be a family of sets. For $x\in \Ob(\Cc)$, we define $F(S)_x$ as the free abelian group over the set
\[(\Cc S)_x:=\{(g, s)~|~ g:x\to y \text{~and~} s\in S_y\}.\]
Then, for a morphism $f:x\to y$, the morphism $F(S)(f):F(S)_y\to F(S)_x$ is defined on $(\Cc S)_y$ by 
\[(g,s)\mapsto (fg,s)\in (\Cc S)_x\]
and then extended to $F(S)_y$ by linearity.

If $\varphi=\{\varphi_x\}_{x\in \Ob(\Cc)}:\{S_x\}_{x\in \Ob(\Cc)}\to \{T_x\}_{x\in \Ob(\Cc)}$ is a morphism in $\Set^{\Ob(\Cc)}$. We get maps
\[\begin{array}{cccc}\Cc \varphi_x:&(\Cc S)_x&\longrightarrow &(\Cc T)_x\\ & (g,s)&\longmapsto & (g,\varphi(s))\end{array}\]
which induce a morphism $F(\varphi)$ of $\Z\Cc$-modules between $F(S)$ and $F(T)$.

\begin{lem}
The functor $F:\Set^{\Ob(\Cc)}\to \Z\Cc-\modu$ constructed above is left-adjoint to the forgetful functor $\Z\Cc-\modu\to \Set^{\Ob(\Cc)}$.
\end{lem}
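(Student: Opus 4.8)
The goal is to establish the adjunction
\[
\ho_\Cc(F(S),N)\cong \ho_{\Set^{\Ob(\Cc)}}(S,UN),
\]
where $U$ denotes the forgetful functor sending a $\Z\Cc$-module $N$ to the family $\{N_x\}_{x\in\Ob(\Cc)}$, and this isomorphism must be natural in both $S$ and $N$. The approach I would take is the standard one for free-forgetful adjunctions: exhibit the unit of the adjunction and show it has the appropriate universal property. Concretely, for a family $S=\{S_x\}$ there is a canonical map $\eta_S\colon S\to UF(S)$ in $\Set^{\Ob(\Cc)}$ sending $s\in S_x$ to the generator $(\Id_x,s)\in (\Cc S)_x\subseteq F(S)_x$. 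The claim is that $\eta_S$ is universal among maps from $S$ into (the underlying family of) a $\Z\Cc$-module.

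**The key steps.** First I would fix a $\Z\Cc$-module $N$ and a morphism of families $\varphi\colon S\to UN$, so that for each $x$ we have a set-map $\varphi_x\colon S_x\to N_x$. I then construct the candidate $\Z\Cc$-module morphism $\Phi\colon F(S)\to N$ by specifying it on the generating set $(\Cc S)_x$ and extending $\Z$-linearly: for a generator $(g,s)$ with $g\colon x\to y$ and $s\in S_y$, set
\[
\Phi_x(g,s) := g.\varphi_y(s)\in N_x.
\]
This is forced: any natural transformation extending $\varphi$ must send $(g,s)=M(g)(\Id_y,s)$ to $g.\Phi_y(\Id_y,s)=g.\varphi_y(s)$, which both proves uniqueness and motivates the formula. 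The second step is to verify that the family $\{\Phi_x\}$ is genuinely a natural transformation of contravariant functors, i.e.\ that for every $f\colon x\to z$ the square relating $N(f)$ and $F(S)(f)$ commutes. On a generator $(g,s)\in(\Cc S)_z$ with $g\colon z\to y$, one side gives $\Phi_x(F(S)(f)(g,s))=\Phi_x(fg,s)=(fg).\varphi_y(s)$, while the other gives $N(f)(\Phi_z(g,s))=f.(g.\varphi_y(s))$; these agree precisely by the associativity relation $f.(g.a)=(fg).a$ noted just before the statement. The third step is to check that $\varphi\mapsto\Phi$ and $\Phi\mapsto U\Phi\circ\eta_S$ are mutually inverse bijections, which is immediate from the definitions: $\Phi_x(\eta_S(s))=\Phi_x(\Id_x,s)=\Id_x.\varphi_x(s)=\varphi_x(s)$, and conversely any $\Phi$ is recovered from its restriction to the generators by the uniqueness computation above. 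Finally I would verify naturality of the bijection in $S$ and $N$, which reduces to chasing generators through the definitions of $F(\varphi)$ and of composition of natural transformations.

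**The main obstacle.** None of the individual verifications is deep; the only genuine content is the naturality check in the second step, where one must be careful that the contravariance of $\Z\Cc$-modules together with the paper's left-to-right composition convention ($M(fg)=M(f)\circ M(g)$) feed consistently into the formula $\Phi_x(g,s)=g.\varphi_y(s)$. The risk is a variance or a source/target mismatch: one must track that in $(g,s)$ the arrow $g$ starts at the indexing object $x$ while $s$ lives over the \emph{target} $y$, and that applying $F(S)(f)$ precomposes $g$ with $f$ on the left. I expect that once the convention is pinned down, the associativity identity $f.(g.a)=(fg).a$ does all the work, and the remaining naturality diagrams commute by direct substitution on generators with no further ideas required.
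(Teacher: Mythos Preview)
Your proposal is correct and follows essentially the same approach as the paper: both construct the bijection by sending $\varphi$ to the $\Z\Cc$-linear extension $\Phi_x(g,s)=g.\varphi_y(s)$ and recover $\varphi$ by restricting to the generators $(\Id_x,s)$. Your write-up is more thorough than the paper's (you explicitly verify naturality of $\Phi$ via the identity $f.(g.a)=(fg).a$ and mention naturality in both variables), whereas the paper simply writes down the two maps and asserts they are inverse bijections.
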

\begin{proof}
Let $S=\{S_x\}_{x\in \Ob(\Cc)}$ be a family of sets and let $M$ be a $\Z\Cc$-module. If $\varphi:S\to \{M_x\}_{x\in \Ob(\Cc)}$ is a morphism in $\Set^{\Ob(\Cc)}$, then the formula
\[\forall x\in \Ob(\Cc), \sum gs \in F(S)_x, ~\eta_x\left(\sum g s\right)=\sum g\varphi(s)\in M_x\]
yields a natural transformation $\eta:F(S)\Rightarrow L$, which is uniquely determined by $\varphi$. Conversely, if $\eta:F(S)\Rightarrow M$ is a natural transformation, then defining $\varphi(s):=\eta_x(s)$ for $s\in S_x$ induces a morphism $\varphi:S\to \{M_x\}_{x\in \Ob(\Cc)}$ in $\Set^{\Ob(\Cc)}$.

The applications $\eta\mapsto \varphi$ and $\varphi\mapsto \eta$ are inverse bijections, which give the desired adjunction.
\end{proof}

\begin{exemple}
Let $x_0\in \Ob(\Cc)$, the hom functor $\Cc(-,x_0)$ is the free functor over the family $\{M_x\}_{x\in \Ob(\Cc)}$ where
\[M_x=\begin{cases} \varnothing& \text{if }x\neq x_0\\ \{*\} & \text{if }x=x_0\end{cases}\]
In this case, the adjunction formula can be seen as a consequence of the Yoneda Lemma.
\end{exemple}

\begin{lem}
Free modules over $\Cc$, in the sense defined above, are projective objects in the category $\Z\Cc-\modu$
\end{lem}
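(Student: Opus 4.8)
The plan is to deduce projectivity of $F(S)$ from the adjunction established in the previous lemma, via the general principle that a left adjoint carries projective objects to projective objects whenever its right adjoint preserves epimorphisms. Concretely, I would verify two facts: first, that the forgetful functor $U\colon \Z\Cc-\modu\to \Set^{\Ob(\Cc)}$ preserves epimorphisms; and second, that every object of $\Set^{\Ob(\Cc)}$ is projective. The statement then follows by transporting a lift across the adjunction.

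First I would characterize the epimorphisms of $\Z\Cc-\modu$. Since $\Z\Cc-\modu$ is the category of contravariant functors $\Cc\to \Ab$ and $\Ab$ is abelian, kernels and cokernels are computed objectwise; in particular a morphism $p\colon M\to N$ is an epimorphism if and only if each component $p_x\colon M_x\to N_x$ is surjective. Consequently $U(p)$ is a family of surjective maps of sets, hence an epimorphism in $\Set^{\Ob(\Cc)}$ (where, $\Set^{\Ob(\Cc)}$ being a product of copies of $\Set$, epimorphisms are again exactly the objectwise surjections). Thus $U$ preserves epimorphisms. Next I would observe that every object of $\Set^{\Ob(\Cc)}$ is projective: given an epimorphism $q\colon A\to B$ and a morphism $f\colon S\to B$, the axiom of choice provides, for each $x$, a set-theoretic section of the surjection $q_x$, and precomposing the resulting family with $f$ yields the required lift. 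Equivalently, projectivity is tested componentwise in a product category, and every set is projective in $\Set$.

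Finally I would assemble the lifting argument. Let $p\colon M\to N$ be an epimorphism in $\Z\Cc-\modu$ and let $g\colon F(S)\to N$ be arbitrary. Under the adjunction bijection $\ho_\Cc(F(S),-)\cong \ho_{\Set^{\Ob(\Cc)}}(S,U(-))$, the morphism $g$ corresponds to a map $\hat g\colon S\to U(N)$. Since $U(p)$ is an epimorphism and $S$ is projective in $\Set^{\Ob(\Cc)}$, there is a lift $\hat h\colon S\to U(M)$ with $U(p)\circ \hat h=\hat g$. Transporting $\hat h$ back across the adjunction gives $h\colon F(S)\to M$, and naturality of the adjunction in its second variable shows that $p\circ h$ corresponds to $U(p)\circ\hat h=\hat g$, whence $p\circ h=g$. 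This exhibits the desired lift and proves that $F(S)$ is projective.

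The only points requiring genuine care are the objectwise description of epimorphisms in the functor category and the commutativity of the naturality square for the adjunction; both are standard, so I do not expect a serious obstacle. The proof is essentially the formal fact that a left adjoint to an epimorphism-preserving functor sends projectives to projectives, applied to the forgetful/free adjunction together with the projectivity of all objects of $\Set^{\Ob(\Cc)}$.
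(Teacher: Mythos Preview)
Your proof is correct and follows essentially the same approach as the paper: both use the adjunction to transport the lifting problem to $\Set^{\Ob(\Cc)}$, invoke the objectwise characterization of epimorphisms in the functor category, lift there via the axiom of choice, and transport back. The only difference is presentational: you name the governing abstract principle (left adjoints to epimorphism-preserving functors send projectives to projectives) and verify its hypotheses, whereas the paper carries out the same steps by hand without isolating that principle.
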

\begin{proof}
Let $S=\{S_x\}_{x\in \Ob(\Cc)}$ be a family of sets. Let also $\epsi:M\twoheadrightarrow N$ be an epimorphism of $\Cc$-modules, and let $\eta:F(S)\to N$ be a morphism of $\Cc$-module. We want to construct a morphism $\mu:F(S)\to M$ such that $\epsi\mu=\eta$.\newline By adjunction, the morphism $\eta$ induces, for each $x\in \Ob(\Cc)$, a map $\eta'_x:S_x\to N_x$. As $\Cc-\modu$ is a functor category, stating that $\epsi$ is an epimorphism amounts to saying that, for all $x\in \Ob(\Cc)$, $\epsi_x$ is an epimorphism of abelian groups, in particular it is onto. We can thus construct a map $\mu'_x:S_x\to M_x$ such that $\epsi'_x\circ\mu'_x=\eta'_x$. Considering the morphism $\mu$ induced by $\mu'$ in the adjunction gives the desired result.
\end{proof}

\begin{rem}
In Section \ref{sec:2.1}, we will construct a free resolution of the trivial module over a Gaussian category $\Cc$. This last lemma shows that such a free resolution is indeed a projective resolution suitable for homology computations.
\end{rem}

\subsection{Tensor product}
In order to define the homology of a category $\Cc$ with coefficients in a $\Cc$-module $M$, we need to define a tensor product which extends the usual tensor product of modules. This definition is an immediate generalization of the construction given in \cite[Section 6]{mitc} (which only considers $\Z,\Cc$-bimodules and $\Cc,\Z$-bimodules).

Let $\Cc,\Dd,\Ee$ be categories. A $\Cc,\Dd$-\nit{bimodule} is a functor $\Cc\times \Dd^{op}\to \Ab$. The morphisms of bimodules between two $\Cc,\Dd$-bimodules $M$ and $N$ will be denoted by $\ho_{\Cc,\Dd}(M,N)$.

Let $M$ and $N$ be a $\Cc,\Dd$-bimodule and a $\Dd,\Ee$ bimodule, respectively, and let $(x,z)\in \Ob(\Cc\times \Ee^{op})$. We define
\[P_{x,y}:=\bigoplus_{z\in \Ob(\Dd)} M_{x,z}\otimes_{\Z} M_{z,y}.\]
The image of $(x,y)$ under the functor $M\otimes_{\Cc} N$ is then defined as the quotient of $P_{x,y}$ by all relations of the form $ad\otimes b=a\otimes db$ where $d\in \Dd(z,z'), a\in M_{x,z},b\in M_{z',y}$. 

A morphism $c\in \Cc(x,x')$ acts by $f.(a\otimes b):=(fa)\otimes b$. A morphism $e\in \Ee(y,y')$ acts by $(a\otimes b).e:=a\otimes(be)$.

Like in the usual case, we recover an adjunction between the tensor product and the $\ho$ functor. This adjointness property directly gives right-exactness of the tensor product.

\begin{prop}\label{tenshom} Let $M,N,Q$ be a $\Cc,\Dd$-bimodule, a $\Dd,\Ee$-bimodule and a $\Cc,\Ee$-bimodule, respectively. We have
\[\ho_{\Cc,\Ee}(M\otimes_\Cc N,Q)\simeq \ho_{\Dd,\Ee}(N,\ho_\Cc(M,Q))\]
where $\ho_{\Cc}(M,Q)$ denotes the $\Dd,\Ee$ bimodule sending $z,y$ to $\ho_\Cc(M_{.,z},Q_{.,y})$.\newline 
This isomorphism is natural and induces an adjunction.
\end{prop}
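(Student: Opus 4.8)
The plan is to produce an explicit pair of mutually inverse maps between the two hom-sets and then to check that this bijection is natural, which is precisely the data of the claimed adjunction. I first unwind the definitions. A morphism $\Phi\in\ho_{\Cc,\Ee}(M\otimes_\Cc N,Q)$ is a family of group homomorphisms $\Phi_{x,y}\colon (M\otimes_\Cc N)_{x,y}\to Q_{x,y}$, natural in $(x,y)\in\Ob(\Cc\times\Ee^{op})$; and a morphism $\Psi\in\ho_{\Dd,\Ee}(N,\ho_\Cc(M,Q))$ is a family $\Psi_{z,y}\colon N_{z,y}\to\ho_\Cc(M_{-,z},Q_{-,y})$, natural in $(z,y)$. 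Note that $M_{-,z}$ is covariant in $\Cc$, so that $\ho_\Cc(M_{-,z},Q_{-,y})$ is the group of natural transformations of functors $\Cc\to\Ab$.

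From $\Phi$ I would build $\Psi$ by $\Psi_{z,y}(b)_x(a):=\Phi_{x,y}(a\otimes b)$ for $a\in M_{x,z}$ and $b\in N_{z,y}$. Two things must be checked: that each $\Psi_{z,y}(b)$ is a morphism $M_{-,z}\to Q_{-,y}$, which follows from naturality of $\Phi$ in the $\Cc$-variable together with the rule $c.(a\otimes b)=(ca)\otimes b$; and that $\Psi$ is a morphism of $\Dd,\Ee$-bimodules. Its naturality in $\Ee$ is inherited from that of $\Phi$, while its naturality in $\Dd$ is exactly the balancing relation defining the tensor product: for $d\in\Dd(z,z')$ one has $\Psi_{z',y}(db)_x(a)=\Phi_{x,y}(a\otimes db)=\Phi_{x,y}(ad\otimes b)=\Psi_{z,y}(b)_x(ad)$, which says that $\Psi_{z',y}(db)=\Psi_{z,y}(b)\circ M(d)$, i.e. that $\Psi$ commutes with the $\Dd$-action.

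Conversely, from $\Psi$ I would define $\Phi_{x,y}$ on generators by $\Phi_{x,y}(a\otimes b):=\Psi_{z,y}(b)_x(a)$ and extend linearly. The one genuine verification --- and the step I expect to be the main obstacle --- is well-definedness: the formula must respect the relations $ad\otimes b=a\otimes db$. This is precisely where the $\Dd$-naturality of $\Psi$ is used, run in the opposite direction from the previous paragraph: naturality gives $\Psi_{z',y}(db)=\Psi_{z,y}(b)\circ M(d)$, hence $\Phi_{x,y}(a\otimes db)=\Psi_{z',y}(db)_x(a)=\Psi_{z,y}(b)_x(ad)=\Phi_{x,y}(ad\otimes b)$. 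Granting this, the universal property of the quotient provides a well-defined $\Phi_{x,y}$, and its naturality as a $\Cc,\Ee$-bimodule morphism follows because each $\Psi_{z,y}(b)$ is a $\Cc$-morphism (for the $\Cc$-variable) and $\Psi$ is $\Ee$-natural (for the $\Ee$-variable). Keeping the covariance/contravariance conventions straight, so that the $\Dd$-action on $N$ matches the precomposition action on $\ho_\Cc(M,Q)$, is the only place that requires care.

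Finally I would observe that the two assignments $\Phi\mapsto\Psi$ and $\Psi\mapsto\Phi$ are mutually inverse: both composites are identities by direct substitution, since each morphism is determined by its values on the elements $a\otimes b$. To upgrade this bijection to the stated adjunction, I would check that it is natural in $N$ and in $Q$, that is, that it commutes with pre- and postcomposition by bimodule morphisms; this is a routine diagram chase from the explicit formulas. Naturality in $(N,Q)$ is then exactly the statement that $M\otimes_\Cc(-)$ is left adjoint to $\ho_\Cc(M,-)$.
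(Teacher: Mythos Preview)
Your proof is correct and follows essentially the same route as the paper's: both construct the bijection by the explicit formulas $\Psi_{z,y}(b)_x(a)=\Phi_{x,y}(a\otimes b)$ and its inverse $\Phi_{x,y}(a\otimes b)=\Psi_{z,y}(b)_x(a)$. Your write-up is in fact more careful than the paper's, which omits the verification of well-definedness on the tensor product and of $\Dd$-naturality of $\Psi$; you spell these out and correctly identify the balancing relation as the crux in both directions.
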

\begin{proof}
Let $\eta:M\otimes_\Cc\ N\to Q$ be a natural transformation of functors. Let also $(x,y)\in \Ob(\Cc\times \Ee^{op})$, $c\in \Cc(x',x)$ and $e\in \Ee(y,y')$. We have a commutative square
\[\xymatrixrowsep{3pc}\xymatrixcolsep{3pc}\xymatrix{(M\otimes_\Cc N)_{x,y} \ar[r]^-{\eta_{x,y}} \ar[d]_-{(M\otimes_\Cc N)(c,e)}& Q_{x,y} \ar[d]^-{Q(c,e)} \\ (M\otimes_\Cc N)_{x',y'} \ar[r]_-{\eta_{x',y'}} & Q_{x',y'}}\]
which we summarize in the following formula : $\eta(ca\otimes be)=c.\eta(a\otimes b).e$ for all $a\in M$ and $b\in N$.

Let now $b\in N_{z,y}$ for some $z\in \Ob(\Dd)$ and some $y\in \Ob(\Ee)$. For $x\in \Ob(\Cc)$, we have a morphism
\[\begin{array}{cccc} \varphi(b)_x:&M_{x,z} & \longrightarrow &Q_{x,y} \\& a&\longmapsto &\eta (a\otimes b)\end{array}\]
which induces a natural transformation $\varphi(b):M_{.,z}\Rightarrow Q_{.,y}$.

Conversely, for $\psi\in \ho_{\Dd,\Ee}(N,\ho_\Cc(M,Q))$, we have a natural transformation $\varepsilon:M\otimes_\Cc N\Rightarrow Q$ given by
\[\begin{array}{cccc} \varepsilon_{x,y}:&(M\otimes N)_{x,y}&\longrightarrow& Q_{x,y}\\ & a\otimes b&\longmapsto& \psi(b)_{x}(a)\end{array}\]
which gives the inverse bijection.
\end{proof}

\begin{cor}
Let $N$ be a $\Dd,\Ee$ bimodule, then the functor $-\otimes_\Dd N:\Cc -\modu- \Dd\to \Cc- \modu- \Ee$ is a right-exact functor.
\end{cor}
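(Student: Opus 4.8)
The plan is to recognize $-\otimes_\Dd N$ as a left adjoint and then invoke the general principle that a left adjoint between abelian categories preserves cokernels (indeed all colimits), and is therefore right-exact. Proposition \ref{tenshom} already supplies one half of the tensor-hom adjunction: fixing the left factor $M$, it exhibits $M\otimes_\Dd -$ as left adjoint to $\ho_\Cc(M,-)$, which gives right-exactness of the tensor product in its \emph{second} variable. The corollary, however, concerns the \emph{first} variable, so what I would actually use is the symmetric currying obtained by interchanging the roles of the two sides.

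Concretely, for a fixed $\Dd,\Ee$-bimodule $N$ I would introduce the candidate right adjoint $\ho_\Ee(N,-)$, sending a $\Cc,\Ee$-bimodule $Q$ to the bimodule whose value at $(x,z)$ is $\ho_\Ee(N_{z,.},Q_{x,.})$; this is covariant in $x\in\Ob(\Cc)$ through $Q$ and contravariant in $z\in\Ob(\Dd)$ through $N$, so it is a genuine $\Cc,\Dd$-bimodule. I would then establish a natural isomorphism
\[\ho_{\Cc,\Ee}(M\otimes_\Dd N,Q)\simeq \ho_{\Cc,\Dd}(M,\ho_\Ee(N,Q))\]
by transposing the argument of Proposition \ref{tenshom} from the right factor to the left factor: given $\eta:M\otimes_\Dd N\to Q$, fixing $a\in M_{x,z}$ yields a morphism of $\Ee$-modules $b\mapsto\eta(a\otimes b)$ from $N_{z,.}$ to $Q_{x,.}$, and $a\mapsto\eta(a\otimes -)$ assembles into a morphism $M\to\ho_\Ee(N,Q)$ of $\Cc,\Dd$-bimodules; the inverse assignment and the verification that both are natural are identical to those in the proof above. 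This shows that $-\otimes_\Dd N$ is left adjoint to $\ho_\Ee(N,-)$.

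It then remains only to conclude formally: as a left adjoint between the abelian categories $\Cc-\modu-\Dd$ and $\Cc-\modu-\Ee$, the functor $-\otimes_\Dd N$ preserves all colimits that exist; in particular it preserves finite coproducts, hence is additive, and it preserves cokernels, which together are exactly the statement that it is right-exact.

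The point requiring care, more than a real obstacle, is precisely the variable mismatch: one cannot quote Proposition \ref{tenshom} verbatim, since it is stated for the second slot, and must instead check the symmetric adjunction above, whose only content is the well-definedness of $\ho_\Ee(N,-)$ as a $\Cc,\Dd$-bimodule-valued functor. If one wished to sidestep adjunctions altogether, an alternative is to read off right-exactness from the very construction of $M\otimes_\Dd N$ as a cokernel of a map between direct sums of groups $M_{x,z}\otimes_\Z N_{z,y}$, using that $-\otimes_\Z A$ and the formation of cokernels are right-exact; but the adjoint-functor argument is cleaner and consonant with the preceding development.
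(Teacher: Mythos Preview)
Your proof is correct. The paper gives no explicit proof of this corollary, presenting it as an immediate consequence of Proposition~\ref{tenshom} via the ``left adjoints are right-exact'' principle.

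You are right to flag the variable mismatch: as stated, Proposition~\ref{tenshom} exhibits $M\otimes_\Dd-$ (second slot) as a left adjoint, not $-\otimes_\Dd N$ (first slot). The symmetric adjunction
\[\ho_{\Cc,\Ee}(M\otimes_\Dd N,Q)\simeq \ho_{\Cc,\Dd}(M,\ho_\Ee(N,Q))\]
that you write down is indeed what is needed, and your sketch of its proof (transposing the construction in Proposition~\ref{tenshom}) is correct and routine. So your argument is in the spirit of the paper's intended one-line deduction, but more careful: the paper tacitly relies on the reader supplying this mirror-image adjunction, whereas you make it explicit. Your alternative remark---reading off right-exactness directly from the presentation of the tensor product as a cokernel---is also valid and arguably closer to a from-scratch verification.
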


In particular we can define $\Tor_n^{\Z\Dd}(-,N)$ to be the $n$-th left-derived functor of the tensor product $-\otimes_\Dd N$.

\begin{definition}
Let $\Cc,\Dd$ be categories, and let $M$ be a $\Cc,\Dd$-bimodule. The $n$-th \nit{homology group} of $\Cc$ with coefficients in $M$ is defined as
\[H_n(\Cc,M):=\Tor_{n}^{\Z\Cc}(\Z,M)\]
It is endowed with a structure of (right)-$\Dd$-module.
\end{definition}

In the case where $\Cc=G$ is a group, and $A$ is a $\Z G$-module, we recover the classical definition of the homology of $G$ with coefficients in $A$. In this case, one can check that $\Z\otimes_{\Z G}-$ is isomorphic to the \emph{functor of coinvariants}, sending a $\Z G$-module $A$ to 
\[A_G:=\bigslant{A}{\langle g.a-a~|~g\in G,a\in A\rangle}\]
(see for instance \cite[Section 6.1]{weibel}).

Back to the general case, if we specialize the above definition to the case where $\Dd=\Z$, then the tensor product $\Z\otimes_{\Z \Cc} M$ is constructed by considering the quotient of the direct sum
\[P:=\bigoplus_{z\in \Ob(\Cc)} M_z\]
by all relations of the form $f.a=a$ for $a\in M_z$ and $f\in \Cc(-,z)$.  We see that this construction is reminiscent of the functor of coinvariants in group homology.

\begin{rem}
The arguments of \cite[Section 6.5]{weibel} can be adapted to give a notion of ``bar resolution'' for categories. The bar resolution induces in turn a canonical resolution of the trivial $\Z\Cc$-module.
\end{rem}

\subsection{Category, groupoid and group}\label{cgg}

Just like for monoids, one can construct the \nit{enveloping groupoid} $\Gg(\Cc)$ of a category $\Cc$ by formally inverting all the morphisms of $\Cc$ (see \cite[Section II.3.2]{ddgkm}). Depending on the properties of $\Cc$, the enveloping groupoid $\Gg(\Cc)$ can be more or less hard to describe. In particular there is no reason that the homology of $\Cc$ should be the same as that of $\Gg(\Cc)$. 

\begin{exemple}(\cite[Theorem 1]{mcduff}) Every path-connected space has the same weak homotopy type as the classifying space of some discrete monoid. This is obviously false if we replace ``monoid'' by ``group'', since the classifying space of a discrete group is always a $K(\pi,1)$ space.
\end{exemple}

Let $\Cc$ be a category, and let $\Gg:=\Gg(\Cc)$ be its enveloping groupoid. As $\Gg$ and $\Cc$ have the same set of objects, one can consider $\Z\Gg$ as a $\Gg,\Cc$ bimodule, sending a pair of objects $(x,y)$ to $\Z\Gg(x,y)$.

We have a ``scalar restriction'' functor $\Z\Gg-\modu\to \Z\Cc-\modu$ coming from the canonical functor $\Cc\to \Gg$. We also have an ``inversion of scalars'' functor $\Z\Cc-\modu\to \Z\Gg-\modu$ given by $\Z\Gg\otimes_\Cc -$.

\begin{lem}
For every category $\Cc$, the scalar inversion functor $\Z\Cc-\modu\to \Z\Gg-\modu$ is left-adjoint to the scalar restriction functor $\Z\Gg-\modu\to \Z\Cc$.
\end{lem}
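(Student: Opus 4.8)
The plan is to exhibit the adjunction as an instance of the standard hom-tensor adjunction from Proposition \ref{tenshom}, specialized to the bimodule $\Z\Gg$ viewed as a $\Gg,\Cc$-bimodule. Concretely, scalar inversion is the functor $\Z\Gg\otimes_\Cc -$, so for a $\Cc$-module $M$ and a $\Gg$-module $N$ I want a natural isomorphism
\[
\ho_\Gg(\Z\Gg\otimes_\Cc M,\,N)\simeq \ho_\Cc(M,\,\operatorname{res}N),
\]
where $\operatorname{res}N$ denotes the scalar restriction of $N$ along the canonical functor $\Cc\to\Gg$. Here I am treating modules as $(-,\Z)$-bimodules so that the one-sided tensor and hom of Proposition \ref{tenshom} apply; the trivial category $\Z$ plays the role of $\Ee$, and $\Gg$ plays the role of $\Cc$ in that statement while our $\Cc$ plays the role of $\Dd$.

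First I would apply Proposition \ref{tenshom} directly to get
\[
\ho_\Gg(\Z\Gg\otimes_\Cc M,\,N)\simeq \ho_\Cc\!\bigl(M,\,\ho_\Gg(\Z\Gg,N)\bigr).
\]
The remaining step is then to identify the inner hom-bimodule $\ho_\Gg(\Z\Gg,N)$ with the scalar restriction $\operatorname{res}N$ as a $\Cc$-module. By the description in Proposition \ref{tenshom}, $\ho_\Gg(\Z\Gg,N)$ sends an object $x$ to $\ho_\Gg(\Z\Gg(-,x),N)$, which is a hom out of a representable (corepresentable) functor on $\Gg$. By the Yoneda lemma for $\Gg$-modules this is naturally isomorphic to $N_x$, and one checks that the induced action of a morphism $f\in\Cc(y,x)$ agrees with the action of its image in $\Gg$ acting on $N$, which is exactly the definition of $\operatorname{res}N$. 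Composing the two isomorphisms gives the claimed adjunction, and naturality in both variables is inherited from the naturality asserted in Proposition \ref{tenshom} together with naturality of Yoneda.

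The main obstacle I anticipate is purely bookkeeping: making the Yoneda identification of $\ho_\Gg(\Z\Gg(-,x),N)$ with $N_x$ compatible with the variance conventions and with the paper's composition convention $fg$ (so that $M(fg)=M(f)\circ M(g)$). One must verify carefully that the $\Cc$-module structure transported through Yoneda is contravariant in the correct sense and reproduces $f.a$ for the restricted action, rather than its opposite. Once the variances are pinned down, everything is formal. An alternative, should the direct appeal to Proposition \ref{tenshom} prove awkward because of the bimodule-versus-module packaging, is to construct the unit and counit of the adjunction by hand: the unit $M\to\operatorname{res}(\Z\Gg\otimes_\Cc M)$ sends $a\mapsto 1_x\otimes a$, and the counit $\Z\Gg\otimes_\Cc\operatorname{res}N\to N$ sends $g\otimes n\mapsto g.n$; verifying the triangle identities is then a short computation using associativity of the $\Gg$-action. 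I would favour the first route since it reuses the machinery already established.
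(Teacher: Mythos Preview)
Your proposal is correct and essentially identical to the paper's proof: both apply the tensor--hom adjunction of Proposition~\ref{tenshom} to obtain $\ho_\Gg(\Z\Gg\otimes_\Cc M,N)\simeq \ho_\Cc(M,\ho_\Gg(\Z\Gg,N))$, and then identify $\ho_\Gg(\Z\Gg,N)$ with the scalar restriction of $N$ via the Yoneda lemma. The paper's version is terser, but your discussion of variance bookkeeping and the alternative unit/counit construction is a reasonable elaboration rather than a different argument.
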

\begin{proof}
This comes from the tensor-hom adjunction of Proposition \ref{tenshom}. Let $M$ and $Q$ respectively be a $\Cc$-module and a $\Gg$-module. We have
\[\ho_\Gg(\Z\Gg\otimes_\Cc M,Q)\simeq \ho_{\Cc}(M,\ho_\Gg(\Z\Gg,Q)).\]
The $\Cc$-module $\ho_{\Gg}(\Z\Gg,Q)$ is isomorphic to the scalar restriction functor (by the Yoneda Lemma).
\end{proof}

This lemma is somewhat reminiscent of the Frobenius reciprocity. Except here, instead of inducing from a subgroup to an ambient group, we start from a category and we induce to its enveloping groupoid. This adjointness property implies in particular that the functor $\Z_\Gg\otimes_\Cc -$ is right-exact. 
 
We want a condition under which the scalar inversion functor is not only right-exact (which always holds), but also left-exact. This would ensure that this functor preserves homology. A good such condition is given by the notion of an Ore category. 

\begin{definition}(\cite[Definition II.1.14 and Definition II.3.10]{ddgkm}) \label{leftore}\newline We say that a category $\Cc$ is
\begin{enumerate}[-]
\item \nit{left-cancellative}(resp. \nit{right-cancellative}) if every relation $fg=fg'$ (resp. $gf=g'f$) with $f,g,g'\in \Cc$ implies $g=g'$. The category $\Cc$ is \nit{cancellative} if it is both left- and right-cancellative.
\item a \nit{left-Ore category} if it is cancellative and any two elements with the same target admit a common left-multiple.
\end{enumerate}
\end{definition}

This classical notion allows for a convenient description of the enveloping groupoid $\Gg(\Cc)$ in terms of left-fractions.

\begin{lem}(\cite[Proposition II.3.11]{ddgkm})\newline 
If $\Cc$ is a left-Ore category, then the enveloping groupoid $\Gg$ of $\Cc$ can be described as a groupoid of fractions: the functor $\Cc\to \Gg$ is injective on morphisms and every morphism in $\Gg$ has the form $f^{-1}g$ for some $f,g\in \Cc$.
\end{lem}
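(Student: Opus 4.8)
The plan is to build an explicit model $\Gg'$ of the groupoid of fractions out of \emph{left fractions}, and then to identify it with the enveloping groupoid $\Gg$ via the universal property. Concretely, I would take $\Ob(\Gg')=\Ob(\Cc)$ and, for $y,z\in\Ob(\Cc)$, declare the morphisms $y\to z$ to be equivalence classes of pairs $(f,g)$ with $f\in\Cc(x,y)$ and $g\in\Cc(x,z)$ sharing a common source $x$, a pair I write $f^{-1}g$. Two pairs $f_1^{-1}g_1$ and $f_2^{-1}g_2$, with sources $x_1$ and $x_2$, should be equivalent when there exist $a\in\Cc(x_0,x_1)$ and $b\in\Cc(x_0,x_2)$ with $af_1=bf_2$ and $ag_1=bg_2$.

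First I would check that this relation is an equivalence relation. Reflexivity and symmetry are immediate, and transitivity is the first place the left-Ore hypothesis is used. Given equivalences realized by $(a,b)$ and $(c,d)$, the morphisms $b$ and $c$ share the target $x_2$, so by the Ore condition they admit a common left-multiple $sb=tc$; one then computes
\[(sa)f_1=s(af_1)=s(bf_2)=(sb)f_2=(tc)f_2=t(cf_2)=t(df_3)=(td)f_3,\]
and likewise $(sa)g_1=(td)g_3$, which realizes the required equivalence through $(sa,td)$.

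Next I would equip $\Gg'$ with a composition. The essential ingredient is the rewriting of a ``wrong-way'' fraction: given $g\in\Cc(x,z)$ and $h\in\Cc(u,z)$ with the common target $z$, the Ore condition yields $p,q$ with $pg=qh$, and one sets $gh^{-1}=p^{-1}q$. Composing two fractions then gives $(f^{-1}g)(h^{-1}k)=f^{-1}(gh^{-1})k=(pf)^{-1}(qk)$, which is again a left fraction. I expect the main obstacle to be verifying that the class of $(pf)^{-1}(qk)$ is independent both of the choice of common left-multiple $(p,q)$ and of the chosen representatives of the two input classes: this is exactly where right-cancellativity and several further applications of common left-multiples are needed. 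Once this is done, associativity, the identities $1_y=\mathrm{id}_y^{-1}\mathrm{id}_y$, and the inverses $(f^{-1}g)^{-1}=g^{-1}f$ are routine, so $\Gg'$ is a groupoid.

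Finally I would define $\iota:\Cc\to\Gg'$ by $\iota(g)=\mathrm{id}_x^{-1}g$ for $g\in\Cc(x,z)$. Left-cancellativity shows $\iota$ is injective on morphisms, since $\mathrm{id}_x^{-1}g\sim\mathrm{id}_x^{-1}g'$ through some $(a,b)$ forces $a=b$ and $ag=ag'$, whence $g=g'$; and every morphism of $\Gg'$ is by construction of the form $f^{-1}g=\iota(f)^{-1}\iota(g)$. It then remains to establish that $(\Gg',\iota)$ satisfies the universal property of the enveloping groupoid: for any functor $\Phi$ from $\Cc$ to a groupoid $\Hh$, the assignment $f^{-1}g\mapsto\Phi(f)^{-1}\Phi(g)$ is well defined (because $\Phi$ carries the defining relations $af_1=bf_2$ and $ag_1=bg_2$ to relations that become invertible in $\Hh$), is functorial, and is the unique groupoid morphism extending $\Phi$. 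By the uniqueness of an object solving a universal problem, $\Gg'$ is then canonically isomorphic to $\Gg(\Cc)$, and the two asserted properties, namely injectivity on morphisms and the fractional form $f^{-1}g$ of every arrow, transfer from $\Gg'$ to $\Gg$.
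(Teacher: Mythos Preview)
The paper does not supply its own proof of this lemma: it is stated with a citation to \cite[Proposition II.3.11]{ddgkm} and nothing further. Your proposal is precisely the classical Ore-localization construction (adapted from monoids to categories) that one would find in such a reference, so in that sense your approach coincides with the intended one.

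Your sketch is correct in outline. A couple of small remarks: in the transitivity argument you use only the existence of common left-multiples, which is fine here; the place cancellativity genuinely enters is exactly where you flag it, namely in showing the composition is independent of the choice of common left-multiple (two choices $(p,q)$ and $(p',q')$ with $pg=qh$ and $p'g=q'h$ must be related, and one needs right-cancellativity to pass from $up g = u'p' g$ to $up=u'p'$, etc.), and in the injectivity of $\iota$ (left-cancellativity). Since a left-Ore category is by definition cancellative, both are available. The rest is indeed routine verification, and the identification with $\Gg(\Cc)$ via the universal property is the right way to conclude.
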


This convenient description of the morphisms $\Gg(\Cc)$ has a remarkable consequence on modules of the form $\Z \Gg(x,-)\otimes_\Cc -$ for $x\in \Ob(\Cc)$. This in turn induces the exactness of the scalar inversion functor $\Z \Gg\otimes_\Cc -$.

\begin{prop}Let $x\in \Ob(\Cc)$. If $\Cc$ is a left-Ore category, then the right $\Cc$-module $\Z\Gg(x,-)\otimes_\Cc-$ is a direct limit of free $\Cc$-modules. In particular it is a flat module.
\end{prop}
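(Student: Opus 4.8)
The plan is to realize $\Z\Gg(x,-)$ as a filtered colimit of free right $\Cc$-modules and then invoke the general principle that such a colimit is flat. Since the statement is used in the left-Ore setting of Definition \ref{leftore}, I work under that hypothesis; then by the preceding lemma the canonical functor $\Cc\to\Gg$ is injective on morphisms and every arrow of $\Gg(x,y)$ is a fraction $f^{-1}g$ with $f\in\Cc(z,x)$, $g\in\Cc(z,y)$ for a common source $z$. Here $\Z\Gg(x,-)$ carries the right $\Cc$-module structure given by post-composition $\alpha\mapsto\alpha h$ for $h\colon y\to y'$. I would set up the index category $\Dd_x$ whose objects are the morphisms $f\colon z\to x$ with target $x$, a morphism $f\to f'$ being an arrow $e\colon z'\to z$ with $ef=f'$. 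To $f$ I attach the covariant representable $\Z\Cc(z,-)$, free of rank one as a right $\Cc$-module by the right-handed analogue of the Example computing $\Cc(-,x_0)$, and to $e$ the transition map $e_*\colon\Z\Cc(z,-)\to\Z\Cc(z',-)$, $g\mapsto eg$. I then define the cocone $\theta_f\colon\Z\Cc(z,-)\to\Z\Gg(x,-)$, $g\mapsto f^{-1}g$; the computation $\theta_{f'}(eg)=(ef)^{-1}(eg)=f^{-1}g=\theta_f(g)$ gives $\theta_{f'}\circ e_*=\theta_f$, so the $\theta_f$ induce a comparison map $\Phi\colon\varinjlim_{\Dd_x}\Z\Cc(z,-)\to\Z\Gg(x,-)$.

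Next I would verify that $\Dd_x$ is filtered and that $\Phi$ is an isomorphism. Filteredness reduces to the left-Ore axioms: $\Dd_x$ contains $\mathrm{id}_x$, any two objects $f,f'$ (both with target $x$) admit a common upper bound by the existence of a common left-multiple $ef=e'f'$, and right-cancellativity forbids distinct parallel morphisms (if $e_1f=e_2f$ then $e_1=e_2$), so the coequalizing condition is trivial. For $\Phi$: each $\theta_f$ is injective, since $f^{-1}g=f^{-1}g'$ forces $g=g'$ after left-multiplying by $f$ and using injectivity of $\Cc\to\Gg$, and the elements $f^{-1}g$ form part of the basis of the free abelian group $\Z\Gg(x,y)$; hence $\theta_f$ is injective on bases and thus injective. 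Surjectivity of $\Phi$ is exactly the fraction description. Injectivity of $\Phi$ is then automatic from filteredness: any class in the colimit is represented at a single stage $f$, and if it dies under $\Phi$ it is killed by the injective $\theta_f$, hence was already zero. Thus $\Z\Gg(x,-)\cong\varinjlim_{\Dd_x}\Z\Cc(z,-)$ is a directed union of free submodules.

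Finally I would deduce flatness. Free modules are flat: by the co-Yoneda isomorphism $\Z\Cc(z,-)\otimes_\Cc M\cong M_z$, tensoring with a representable is evaluation at $z$, which is exact because kernels and cokernels in $\Z\Cc-\modu$ are computed objectwise (alternatively, free modules are projective by the earlier Lemma, and projective implies flat). Since the tensor product is a left adjoint by Proposition \ref{tenshom}, it commutes with the filtered colimit, so $\Z\Gg(x,-)\otimes_\Cc M\cong\varinjlim_{\Dd_x}M_z$; as filtered colimits are exact in $\Ab$, the functor $\Z\Gg(x,-)\otimes_\Cc-$ is exact, i.e. $\Z\Gg(x,-)$ is flat. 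Equivalently, $\Tor^{\Z\Cc}_1(\Z\Gg(x,-),-)=\varinjlim\Tor^{\Z\Cc}_1(\Z\Cc(z,-),-)=0$.

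I expect the main obstacle to be the middle step: organizing the denominators into a genuinely filtered index category and confirming that $\Phi$ is an isomorphism rather than merely epi or mono. This is precisely where the left-Ore hypothesis does the work, via common left-multiples for directedness and cancellativity for thinness, and where one must check that the representing fractions and the transition maps $e_*$ are compatible so that the colimit reproduces $\Z\Gg(x,-)$ on the nose.
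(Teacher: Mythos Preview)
Your proof is correct and follows essentially the same approach as the paper: realize $\Z\Gg(x,-)$ as a filtered colimit of the representables $\Z\Cc(z,-)$ indexed by morphisms $f\colon z\to x$ via the maps $g\mapsto f^{-1}g$, then use that $\Tor$ (equivalently, tensoring) commutes with filtered colimits. Your write-up is more detailed than the paper's---you explicitly verify filteredness of the index category and that the comparison map is an isomorphism---but the underlying strategy is identical.
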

\begin{proof}
Our argument is a categorical rephrasing of the proof of \cite[Theorem 2.3]{squier}.

For every morphism $f\in \Cc(y,x)$, precomposition by $f^{-1}$ induces a morphism of right $\Cc$-module $\varphi_f:\Z\Cc(y,-)\to \Z\Gg(x,-)$. Since $\Cc$ is a left-Ore category, every morphism in $\Gg(x,-)$ can be described as a fraction. This means that $\Gg(x,-)$ is the union of the images of the morphisms $\varphi_f$ for $f\in \Cc(-,x)$. Furthermore, the system given by the $\varphi_f$ is a directed system because of the existence of left-multiples.

Just like in the usual case, the functor $\Tor$ commutes with inductive limits, which gives the flatness of $\Z\Gg(x,-)$.
\end{proof}

\begin{theo}\label{2.16}
Let $\Cc$ be a left-Ore category with enveloping groupoid $\Gg$. The $\Gg,\Cc$-bimodule $\Z\Gg$ is a flat $\Cc$-module.
\end{theo}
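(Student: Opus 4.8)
The plan is to reduce this global flatness statement about the $\Gg,\Cc$-bimodule $\Z\Gg$ to the object-wise flatness already established in the preceding Proposition. Recall that flatness of a right $\Cc$-module $P$ means that the functor $P\otimes_\Cc-$ (equivalently, each $\Tor_n^{\Z\Cc}(P,-)$ for $n>0$) vanishes on short exact sequences. The subtlety here is that $\Z\Gg$ is a $\Gg,\Cc$-bimodule whose underlying object is the family $\{\Z\Gg(x,-)\}_{x\in\Ob(\Cc)}$ of right $\Cc$-modules indexed by objects, so I first want to express its $\Tor$ in terms of the individual right $\Cc$-modules $\Z\Gg(x,-)$ that the previous Proposition handles.

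First I would observe that, because the homology of a category and the derived tensor product are computed point-wise over $\Ob(\Cc)$ (the module category $\Z\Cc-\modu$ being a functor category into $\Ab$, so that exactness and kernels/cokernels are detected objectwise), the bimodule tensor product $\Z\Gg\otimes_\Cc M$ decomposes as the family whose value at an object $x$ is the abelian group $(\Z\Gg(x,-))\otimes_\Cc M$. Consequently, for any short exact sequence $0\to M'\to M\to M''\to 0$ of left $\Cc$-modules, the associated long exact $\Tor$-sequence for the bimodule $\Z\Gg$ splits, at each object $x$, into the long exact sequence attached to the single right $\Cc$-module $\Z\Gg(x,-)$. This is the key reduction: flatness of the bimodule $\Z\Gg$ holds if and only if each $\Z\Gg(x,-)$ is flat as a right $\Cc$-module.

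Next I would invoke the previous Proposition directly: for each fixed $x\in\Ob(\Cc)$, the right $\Cc$-module $\Z\Gg(x,-)$ was exhibited as a directed colimit of the free right $\Cc$-modules $\Z\Cc(y,-)$ along the maps $\varphi_f$ indexed by $f\in\Cc(-,x)$, the system being directed thanks to the existence of common left-multiples in the left-Ore category $\Cc$. Since free modules are flat, and since $\Tor$ commutes with filtered colimits (so a filtered colimit of flat modules is flat), each $\Z\Gg(x,-)$ is flat. Combining this with the point-wise reduction of the previous step yields that $\Z\Gg\otimes_\Cc-$ is exact, i.e. $\Z\Gg$ is a flat $\Cc$-module, as claimed.

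I expect the main obstacle to be the careful justification of the point-wise decomposition of the bimodule tensor product and its $\Tor$-groups, namely that computing $\Tor^{\Z\Cc}$ of the bimodule $\Z\Gg$ amounts to computing, object by object in the $\Gg$-variable, the ordinary $\Tor$ of the right $\Cc$-modules $\Z\Gg(x,-)$. Once one checks that a projective (or flat) resolution of a left $\Cc$-module $M$ tensored with $\Z\Gg$ gives, at each object $x$, precisely the complex computing $\Tor$ against $\Z\Gg(x,-)$ — which follows from the objectwise nature of exactness and tensor in the functor category $\Z\Cc-\modu$ — the rest is a formal consequence of the preceding Proposition together with the standard fact that flatness passes to filtered colimits.
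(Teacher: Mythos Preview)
Your proposal is correct and follows essentially the same route as the paper: you reduce flatness of the bimodule $\Z\Gg$ to the objectwise statement that each $\Z\Gg(x,-)$ is flat by using that $(\Z\Gg\otimes_\Cc M)_x = \Z\Gg(x,-)\otimes_\Cc M$ and that exactness in the functor category is detected objectwise, then invoke the preceding Proposition. The paper's proof is terser but identical in substance.
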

\begin{proof}
We already know that, as a left-adjoint, the functor $\Z\Gg\otimes_{\Cc}-$ is right-exact. We only have to show that it is left-exact, that is it preserves kernels. \newline Let $M$ be a $\Cc$-module, the abelian group $\Z\Gg(x,-)\otimes_\Cc M$ is a quotient of the direct sum
\[\bigoplus_{y\in \Cc}\Z\Gg(x,y)\otimes_\Z M_y\]
We see that the abelian group $\Z\Gg(x,-)\otimes_\Cc M$ is the image of $x$ under the functor $\Z\Gg\otimes_\Cc M$. As kernels in a functor category are computed objectwise, the flatness of $\Z\Gg(x,-)$ induces the flatness of $\Z\Gg$ as claimed.
\end{proof}

This is the exactness property we were looking for. A first consequence is that, under the assumption that $\Cc$ is left-Ore, the scalar inversion functor preserves homology.

\begin{cor}\label{cor2.16}
Let $\Cc$ be a left-Ore category. For every $\Cc$-module $M$ and every $n\in \Z_{\geqslant 0}$ we have $H_n(\Cc,M)=H_n(\Gg(\Cc),\Z\Gg\otimes_\Cc M)$.
\end{cor}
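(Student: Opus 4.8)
The plan is to deduce the corollary from Theorem \ref{2.16} by a flat base-change argument, the flatness of $\Z\Gg$ over $\Cc$ being precisely the content of that theorem. Unwinding the definitions, we must compare $H_n(\Cc,M)=\Tor_n^{\Z\Cc}(\Z,M)$ with $H_n(\Gg,\Z\Gg\otimes_\Cc M)=\Tor_n^{\Z\Gg}(\Z,\Z\Gg\otimes_\Cc M)$, and the idea is to compute both groups from a single projective resolution.

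First I would choose a projective resolution $P_\bullet\to M$ of the coefficient module over $\Cc$ and apply the scalar-inversion functor $\Z\Gg\otimes_\Cc-$ to it. Two properties combine here. On the one hand, Theorem \ref{2.16} asserts that $\Z\Gg$ is a flat $\Cc$-module, so $\Z\Gg\otimes_\Cc-$ is exact and $\Z\Gg\otimes_\Cc P_\bullet\to\Z\Gg\otimes_\Cc M$ remains a resolution. On the other hand, scalar inversion is left adjoint to the scalar-restriction functor $\Z\Gg-\modu\to\Z\Cc-\modu$, and the latter is exact since kernels and cokernels in these functor categories are computed objectwise; a left adjoint whose right adjoint is exact preserves projectives, so each $\Z\Gg\otimes_\Cc P_i$ is a projective $\Gg$-module. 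Consequently $\Z\Gg\otimes_\Cc P_\bullet$ is a projective resolution of $\Z\Gg\otimes_\Cc M$ over $\Gg$.

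It then remains to compute $\Tor_n^{\Z\Gg}(\Z,\Z\Gg\otimes_\Cc M)$ from this resolution. Using that $\Tor$ may be computed by resolving either argument, I would write it as $H_n\big(\Z\otimes_\Gg(\Z\Gg\otimes_\Cc P_\bullet)\big)$, and then invoke the associativity of the tensor product (in the spirit of Proposition \ref{tenshom}) together with the isomorphism $\Z\otimes_\Gg\Z\Gg\cong\Z$ of $\Cc$-modules, which merely records that the trivial $\Gg$-module restricts to the trivial $\Cc$-module along $\Cc\to\Gg$. This turns the complex into $\Z\otimes_\Cc P_\bullet$, whose homology is $\Tor_n^{\Z\Cc}(\Z,M)=H_n(\Cc,M)$, yielding the desired equality.

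Since the substantive input, namely the flatness of $\Z\Gg$, is already established in Theorem \ref{2.16}, the corollary is essentially formal. The points demanding care are the balancing of $\Tor$ (so that resolving the coefficient module computes the same groups as the defining resolution of the first argument) and the left/right-module bookkeeping underlying the associativity isomorphism, which must be arranged so that $\Z\otimes_\Gg\Z\Gg$ is indeed the restriction of the trivial module; both are routine once the conventions of Section \ref{modcat} are fixed, and this is where I expect the only genuine friction to lie.
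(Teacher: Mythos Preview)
Your proposal is correct and follows essentially the same flat base-change argument as the paper, which simply records in one line that Theorem \ref{2.16} gives $\Tor_n^{\Z\Cc}(\Z,M)=\Tor_n^{\Z\Gg}(\Z,\Z\Gg\otimes_{\Z\Cc}M)$. Your version spells out the mechanism (exactness from flatness, preservation of projectives from the adjunction, associativity of the tensor product) and correctly flags the balancing of $\Tor$ needed to resolve the second variable rather than the first; this is precisely the content behind the paper's terse sentence.
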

\begin{proof}Let $M$ be a $\Cc$-module. By definition we have $H_n(\Cc,M)=\Tor_n^{\Z\Cc}(\Z,M)$. By Theorem \ref{2.16}, this is equal to $\Tor_n^{\Z\Gg}(\Z,\Z\Gg\otimes_{\Z\Cc}M)=H_n(\Z\Gg,M)$.\end{proof}

Now that we have equality between the homology of a category and that of its enveloping groupoid (under suitable assumptions), we want equality between the homology of a groupoid and that of a group to which it is equivalent as a category.

Let $\Gg$ be a groupoid, that we assume to be connected from now on. We also fix an object $x_0\in \Ob(\Gg)$ and set $G:=\Gg(x_0,x_0)$. 

We denote by $\iota$ the inclusion functor $G\to \Gg$. The choice, for every $x\in \Ob(\Gg)$, of a morphism $u_x\in \Gg(x_0,x)$ induces a functor $\pi:\Gg\to G$, sending $f:x\to y$ to $u_{x}fu_{y}^{-1} \in G$. The functors $\iota$ and $\pi$ are quasi-inverse equivalences of categories. Indeed $\pi\circ \iota$ is the identy morphism of $G$, and $\iota\circ \pi$ is fully faithful, and essentially surjective since $\Gg$ is a connected groupoid. The equivalences $\iota$ and $\pi$ induce in turn equivalences between the categories of $\Gg$-modules and of $G$-modules.

In practice, if $M$ is a $G$-module, then the induced $\Gg$-module sends every object to $M$ and a morphism $f$ acts by $u_xf u_y^{-1}$. We also denote this module by $M$.
\begin{prop}\label{groupoidgroup}
Let $\Gg$ be a connected groupoid, equivalent to a group $G$. For every $G$-module $M$ and every $n\in \Z_{\geqslant 0}$ we have $H_n(G,M)=H_n(\Gg,M)$.
\end{prop}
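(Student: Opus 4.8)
The plan is to transport a projective resolution of the trivial module along the equivalence of module categories induced by $\pi$, and then to compare the two tensor products. Write $\pi^{*}M=M\circ\pi$ for the induced $\Gg$-module (so that a morphism $f\colon x\to y$ acts through $\pi(f)=u_{x}fu_{y}^{-1}$); this is the module denoted $M$ in the statement, and $H_{n}(\Gg,M)=\Tor_{n}^{\Z\Gg}(\Z,\pi^{*}M)$ is computed, according to our definition of $\Tor$, by resolving the trivial module $\Z$ in the first variable. Because $\pi\colon\Gg\to G$ is an equivalence of categories, precomposition $\pi^{*}\colon\Z G-\modu\to\Z\Gg-\modu$ is an equivalence of abelian categories; in particular it is exact and preserves projective objects. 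Moreover it sends the trivial module to the trivial module: $\pi^{*}\Z$ takes every object to $\Z$ and every morphism $f$ to $\Z(\pi(f))=\Id$, so $\pi^{*}\Z=\Z$.

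First I would fix a projective resolution $P_{\bullet}\twoheadrightarrow\Z$ of the trivial $G$-module. Applying $\pi^{*}$ and using exactness, preservation of projectives, and $\pi^{*}\Z=\Z$, the complex $\pi^{*}P_{\bullet}\twoheadrightarrow\Z$ is a projective resolution of the trivial $\Gg$-module. Consequently $H_{n}(\Gg,M)$ is the $n$-th homology of $\pi^{*}P_{\bullet}\otimes_{\Gg}\pi^{*}M$, whereas $H_{n}(G,M)$ is the $n$-th homology of $P_{\bullet}\otimes_{G}M$. It therefore suffices to exhibit an isomorphism of complexes $\pi^{*}P_{\bullet}\otimes_{\Gg}\pi^{*}M\simeq P_{\bullet}\otimes_{G}M$, and this will follow from the single functorial identity
\[\pi^{*}A\otimes_{\Gg}\pi^{*}B\;\simeq\;A\otimes_{G}B,\]
natural in a right $G$-module $A$ and a left $G$-module $B$.

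This tensor comparison is the heart of the matter and the only place where real work is needed. I would establish it by reading each side as a coend: by the very definition of the tensor product over a category, $\pi^{*}A\otimes_{\Gg}\pi^{*}B=\int^{x\in\Gg}A_{\pi(x)}\otimes_{\Z}B_{\pi(x)}$, and since $\pi$ is an equivalence this coend agrees with $\int^{\ast\in G}A_{\ast}\otimes_{\Z}B_{\ast}=A\otimes_{G}B$ — coends being invariant under precomposition by an equivalence of the indexing category. Equivalently, both sides are the direct limit $\varinjlim$ of the same diagram up to the relabelling of objects by $\pi$. An explicit comparison map can be written down by transporting every elementary tensor to the base object $x_{0}$ by means of the chosen morphisms $u_{x}$; one then checks that it respects the defining relations of both tensor products and is bijective, and that naturality in $A$ and $B$ is immediate from the formula.

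Finally I would assemble the pieces. Since $\pi^{*}P_{\bullet}\twoheadrightarrow\Z$ is a projective resolution of the trivial $\Gg$-module, we obtain
\[H_{n}(\Gg,M)=H_{n}\!\left(\pi^{*}P_{\bullet}\otimes_{\Gg}\pi^{*}M\right)\simeq H_{n}\!\left(P_{\bullet}\otimes_{G}M\right)=\Tor_{n}^{\Z G}(\Z,M)=H_{n}(G,M).\]
The one subtlety to keep track of is the bookkeeping of variance (right versus left modules, i.e. contravariant versus covariant functors); for a groupoid this is harmless, since inversion identifies $\Gg$ with $\Gg^{op}$, which is exactly why the comparison map above can treat the two factors symmetrically. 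Should one prefer to remain closer to the explicit setup of the paper, the bar resolution mentioned after the definition of $H_{n}$ offers an alternative: the required identification of the two complexes then reduces to the standard fact that an equivalence of (small) categories induces a homotopy equivalence of nerves, compatibly with local coefficient systems.
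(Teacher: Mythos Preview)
Your proof is correct and follows essentially the same route as the paper: both use that precomposition with the equivalence $\pi$ gives an exact equivalence of module categories sending $\Z$ to $\Z$, and then conclude that $\Tor$ is preserved. The paper compresses this into a single line $\Tor_n^{\Z G}(\Z,M)=\Tor_n^{\Z\Gg}(\pi_*\Z,\pi_*M)$, whereas you unpack exactly the content behind that equality --- namely that $\pi^*$ carries projective resolutions to projective resolutions and that $\pi^*A\otimes_\Gg\pi^*B\simeq A\otimes_G B$ via the coend description of the tensor product --- so your version is a more detailed rendering of the same argument rather than a different one.
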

\begin{proof}
The functor $\pi:\Gg\to G$ induces an equivalence of categories $\pi_*:\Z G-\modu\to \Z\Gg$, which is in particular an exact functor. For a $G$-module $M$, we get
\begin{align*}
H_n(G,M)&=\Tor_n^{\Z G}(\Z,M)\\
&=\Tor_n^{\Z\Gg}(\pi_*(\Z),\pi_*(M))\\
&=\Tor_n^{\Z\Gg}(\Z,M)=H_n(\Gg,M)
\end{align*}
as claimed.
\end{proof}

\section{The Dehornoy-Lafont order complex for categories}\label{sec3}
Garside categories were originally introduced as a natural generalization of Garside mon-\newline oids (see for instance \cite{kgarcat}). A comprehensive survey of the general theory of Garside categories is done in \cite{ddgkm}.

One of the (many) uses of a Garside structure on a category is that it gives rise to convenient resolutions allowing for the computation of the homology of the category (see \cite[Section III.3.4]{ddgkm}). These resolutions, are generalizations of previous works on Garside monoids (see \cite{dehlaf} and \cite{cmw}).

However, the order complex of Dehornoy and Lafont, which we will call the Dehornoy-Lafont complex from now on, has not been adapted to the case of a category in \cite{ddgkm}. This complex (in the case of a monoid) is smaller in size, so it is usually more suitable for practical purposes. We propose in this section a generalization of this complex to a categorical setting.

\subsection{The complex}\label{sec:2.1}
In the case of a monoid, the Dehornoy-Lafont complex was introduced in \cite[Section 4]{dehlaf} under the name ``order resolution''. It is based on a well-ordering of some generating system of the monoid. Although we plan to apply this new complex to a Garside category, the definition can be formulated in a slightly more general case mimicking the definition of Gaussian monoid.

We start with a small category $\Cc$. We define a relation $\preceq$ on $\Cc(x,-)$ by
\[\forall f,g\in \Cc(x,-),~ f\preceq g\Leftrightarrow \exists h~|~ fh=g.\]
In particular, the source of $h$ must be the target of $f$, and its target must be the target of $g$. We say that $g$ is a right-multiple of $f$ and that $f$ left-divides $g$. Likewise, one can define a relation $\succeq$ on $\Cc(-,x)$.

The relation $\preceq$ is obviously reflexive and transitive. But it is not necessarily antisymmetric: if $x$ is an invertible morphism, then we have $x\prec 1 \prec x$. Because of this we always require $\Cc$ to have no nontrivial invertible morphism from now on.

\begin{definition}(\cite[Definition II.2.26]{ddgkm}) \newline Let $\Cc$ be a category with no nontrivial invertible morphisms. The category $\Cc$ is said to be \nit{right-Noetherian} (resp. \nit{left-Noetherian}) if the relation $\preceq$ (resp. $\succeq$) admits no infinite strictly descending chain. The category $\Cc$ is \nit{Noetherian} if it is both left-and right-Noetherian.
\end{definition}

\begin{rem}
The definition stated in \cite{ddgkm} is slightly broader than the one we state here, as the authors need to consider the case where $\Cc$ admits nontrivial isomorphisms. In the case studied here where $\Cc^\times$ is reduced to the identities, the two definitions coincide.
\end{rem}

If $\Cc$ is left-cancellative and left-Noetherian (and $\Cc^\times$ is reduced to the identity morphisms), then one can see that $\preceq$ is a partial order. Likewise, $\succeq$ is a partial order if $\Cc$ is right-cancellative and right-Noetherian (and $\Cc^\times$ is reduced to the identity morphisms).

Following \cite[Definition II.2.52]{ddgkm}, a morphism $a\in \Cc$ is called an \nit{atom} if its only left-divisors are itself and the identity. If $\Cc$ is Noetherian, then every morphism in $\Cc$ is a composition of a finite number of atoms. In this case we also get that a subfamily of $\Cc$ generates $\Cc$ if and only if it contains all of the atoms of $\Cc$.

\begin{definition}(\cite[Definition II.2.9 and Definition II.2.20]{ddgkm})\newline 
Let $\Cc$ be a category, and let $f,g$ be two morphisms with the same target. A \nit{left-lcm} of $f$ and $g$ is a common left-multiple of $f$ and $g$ which right-divides any left-multiple of $f$ and $g$.\newline
The category $\Cc$ admits \nit{conditional left-lcms} if any two elements of $\Cc$ that admits a common left-multiple admit a left-lcm. \newline Following \cite[Section 1.1]{dehlaf}, a right-cancellative right-Noetherian category which admits conditional left-lcms is called \nit{locally left-Gaussian}. If $\Cc$ furthermore admits left-lcms, $\Cc$ is called \nit{left-Gaussian}.
\end{definition}

\begin{rem}\label{gargauss} A Garside category, that is a cancellative Noetherian category endowed with a Garside map (see \cite[Definition V.2.19]{ddgkm}), is in particular a Gaussian category. This is why we can apply the Dehornoy-Lafont complex to Garside categories.
\end{rem}

From now on, we consider a locally left-Gaussian category $\Cc$. The proofs and construction are direct adaptations of the arguments of \cite[Section 4]{dehlaf} to the case of a category. 

We start by fixing a finite set of morphisms $\Aa$, which generates $\Cc$. We also fix, for every object $x$ of $\Cc$, a linear ordering $<$ on the set $\Aa(-,x)$ of elements of $\Aa$ with target $x$. In practice this amounts to fixing a linear ordering on the set $\Aa$, that we then restrict to the sets $\Aa(-,x)$.

For every morphism $f\in \Cc$, the set of elements of $\Aa$ dividing $f$ on the right is ordered by $<$. Since $\Aa$ is finite, one can define $\md(f)$ to be the $<$-least right-divisor of $f$ in $\Aa$.

\begin{definition}
Let $n$ be an integer. A $n$-\nit{cell} is a $n$-tuple $[\alpha_1,\ldots,\alpha_n]$ of elements of $\Aa$ sharing the same target such that $\alpha_1<\ldots<\alpha_n$ and
\[\forall i \in \intv{1,n}, \alpha_i=\md(\lcm(\alpha_i,\ldots,\alpha_n))\]
We say that a $n$-cell $[\alpha_1,\ldots,\alpha_n]$ has source $x\in \Ob(\Cc)$ if $x$ is the source of $\lcm(\alpha_1,\ldots,\alpha_n)$. For $x\in \Ob(\Cc)$, we define $(\Xx_n)_x$ to be the set of $n$-cells with source $x$.

We define $C_n$ to be the free $\Z\Cc$-module associated to the family $\{(\Xx_n)_x\}_{x\in \Ob(\Cc)}$
\end{definition}

In particular we see that, for each object $x$, we have $(\Xx_0)_x=\{[\varnothing]\}$ and $(\Xx_1)_x=\Aa(x,-)$. To avoid confusion, we will alternatively denote by $[\varnothing]_x$ the only element of $(\Xx_0)_x$.

Let $x$ be an object of $\Cc$. By definition of a free module over a category (see Section \ref{modcat}), $(C_n)_x$ is generated as an abelian group by elements of the form $f[A]$, where $f\in \Cc(x,y)$ and $[A]$ is a $n$-cell with source $y$. We call such elements \nit{elementary $n$-chains}.

Like in the case of a monoid, the following preodering on elementary $n$-chains will allow us to use induction arguments.

\begin{definition} We denote by $A_1$ the first element of a nonempty tuple $[A]$. Let $f[A]$ and $g[B]$ be elementary $n$-chains with same source. We say that $f[A] \sqsubset g[B]$ holds if we have either $f\lcm(A)\prec g\lcm(B)$ or $n>0$, $f\lcm(A)=g\lcm(B)$ and $A_1<B_1$.\newline If $\sum f_i[A_i]$ is an arbitrary $n$-chain, we say that $\sum f_i[A_i]\sqsubset g[B]$ holds if $f_i[A_i]\sqsubset g[B]$ holds for every $i$.
\end{definition}

\begin{lem}\label{4.2}
For every $n$, the relation $\sqsubset$ on $n$-dimensional elementary chains with same source is compatible with multiplication on the left, and it has no infinite decreasing sequence.
\end{lem}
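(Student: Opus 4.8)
The plan is to prove the two assertions separately: left-compatibility by a direct case analysis on the defining clauses of $\sqsubset$, and the absence of infinite decreasing sequences by recognizing $\sqsubset$ as a lexicographic combination of two well-founded relations. Throughout I would use that all the left-lcms appearing are genuine morphisms (they exist precisely because the tuples are cells), so the comparisons below are well-defined.

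For compatibility, suppose $f[A]\sqsubset g[B]$, with $f[A]$ and $g[B]$ sharing the source $x$, and let $h\in\Cc(w,x)$; then $hf[A]$ and $hg[B]$ are elementary chains with common source $w$. If $f\lcm(A)\prec g\lcm(B)$, there is a non-identity $u$ with $g\lcm(B)=f\lcm(A)\,u$, and premultiplying by $h$ yields $hg\lcm(B)=(hf\lcm(A))\,u$ with the \emph{same} witness $u$; since $\Cc$ has no nontrivial invertible morphisms, $u\neq\mathrm{id}$ keeps the divisibility strict, so $hf\lcm(A)\prec hg\lcm(B)$ and $hf[A]\sqsubset hg[B]$. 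If instead $n>0$, $f\lcm(A)=g\lcm(B)$ and $A_1<B_1$, then premultiplying the equality by $h$ preserves it while $A_1<B_1$ is untouched, giving $hf[A]\sqsubset hg[B]$ again. The extension to arbitrary chains is immediate, since $h\cdot\sum f_i[A_i]=\sum hf_i[A_i]$ and $\sqsubset$ against a fixed $g[B]$ is defined termwise.

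For well-foundedness, I would identify $\sqsubset$ with the lexicographic order on the pairs $(f\lcm(A),A_1)$, comparing the first coordinate by proper left-divisibility $\prec$ and, in case of equality, the second by the linear order $<$. Right-Noetherianity of $\Cc$ is exactly the assertion that $\prec$ has no infinite descending chain on $\Cc(x,-)$; it also forbids $c=cu$ with $u\neq\mathrm{id}$, so that $\preceq$ is the disjoint union of equality and $\prec$, and hence every $\preceq$-non-increasing sequence is eventually constant (its underlying set has a $\prec$-minimal element, strictly below which no later term can lie). Now in a putative infinite $\sqsubset$-descending sequence $(f_i[A_i])_i$, the first coordinates $c_i:=f_i\lcm(A_i)$ satisfy $c_{i+1}\preceq c_i$ at each step, hence stabilize to some $c$; from that index on, every comparison is forced into the equality clause, so the leading atoms decrease strictly for $<$ infinitely often, contradicting the finiteness of $\Aa$.

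The case analysis and the termwise extension are routine and transfer verbatim from the monoid argument of \cite[Section 4]{dehlaf}. The step deserving genuine care is the well-foundedness: because $\prec$ is only a \emph{partial} order on $\Cc(x,-)$, one must justify precisely that a $\preceq$-non-increasing sequence stabilizes, which is where right-Noetherianity (together with the absence of nontrivial invertibles, ruling out $c=cu$) is used. This, and keeping track of sources so that the left-lcms being compared are meaningful, is the only delicate point; everything else is formal.
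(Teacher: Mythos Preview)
Your proof is correct and follows essentially the same route as the paper: compatibility by case analysis on the two clauses of $\sqsubset$, and well-foundedness by first stabilizing $f_i\lcm(A_i)$ via Noetherianity and then the leading atoms via finiteness of $\Aa$. You are in fact more careful than the paper in two respects: you correctly invoke right-Noetherianity (the paper's ``left-Noetherianity'' is a slip, since it is $\preceq$ that is in play and the standing hypothesis is right-Noetherianity), and you make explicit the point that $cu=c$ with $u\neq\mathrm{id}$ is excluded---which is exactly what is needed to keep strict divisibility strict after left-multiplication, and which the paper's terse ``$f\lcm(A)\prec g\lcm(B)$ implies $hf\lcm(A)\prec hg\lcm(B)$'' leaves unjustified.
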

\begin{proof}
Assume $f[A]\sqsubset g[B]$, and let $h$ be a morphism in $\Cc$. Then $f\lcm(A)\prec g\lcm(B)$ implies $hf[A]\prec hg[B]$, and $f\lcm(A)=g\lcm(B)$ implies $hf\lcm(A)=hg\lcm(B)$. We have $hf[A]\sqsubset hg[B]$ in each case.

Let now $\ldots\sqsubset f_2[A_2]\sqsubset f_1[A_1]$ be a descending sequence. We have a descending sequence $\ldots \preceq f_2\lcm(A_2)\preceq f_1\lcm(A_1)$. The left-Noetherianity of $\Cc$ gives that this sequence is stationary: there is some $i_0$ such that we have $f_i\lcm(A_i)=f_{i+1}\lcm(A_{i+1})$ for $i\geqslant i_0$.
By definition of $\sqsubset$, we must have $(A_{i+1})_1<(A_i)_1$ for $i\geqslant i_0$. The sequence $(A_{j})_1$ for $j\geqslant i_0$ is $<$-decreasing. But $(A_i)_1$ right-divides $\lcm(A_i)$ and $f_i\lcm(A_i)$. So the sequence $(A_{j})_1$ for $j\geqslant i_0$ is a $<$-decreasing sequence of divisors of $f_{i_0}\lcm(A_{i_0})$. The sequence $(A_{j})_1$ thus is stationary, and the sequence $f_i[A_i]$ is stationary.
\end{proof}

We will now define the differential map $\partial_n:C_n\to C_{n-1}$ along with a contracting homotopy $s_n:C_n\to C_{n+1}$ and a so-called reduction map $r_n:C_n\to C_n$. The map $\partial_n$ is $\Z\Cc$-linear, whereas $s_n$ and $r_n$ are only $\Z$-linear.

\begin{definition}
Let $f[A]$ be an elementary chain. We say that $f[A]$ is \nit{irreducible} if either $f[A]=1_x[\varnothing]_x$ or $A_1=\md(f\lcm(A))$. Otherwise we say that $f[A]$ is \nit{reducible}.
\end{definition}

The construction of $\partial_*,r_*$ and $s_*$ uses induction on $n$. The induction hypothesis, denoted $(H_n)$ is the conjunction of the following two statements, where $r_n:=s_{n-1}\circ \partial_n$
\[\begin{array}{lc} (P_n)& \partial_n(r_n(f[A]))=\partial_n(f[A]) \\ (Q_n)& r_n(f[A])\begin{cases} =f[A] & \text{if } f[A] \text{ is irreducible}\\ \sqsubset f[A] & \text{if }f[A]\text{ is reducible}\end{cases} \end{array}\]

In degree $0$, the construction is usual and straightforward: we define $\partial_0:C_0\to \Z$ and $s_{-1}:\Z\to C_0$ by
\[\forall x\in \Ob(\Cc),~~\partial_0([\varnothing]_x):=1\text{ and } s_{-1}(1)=[\varnothing]_x\]

\begin{lem}
Property $(H_0)$ is satisfied.
\end{lem}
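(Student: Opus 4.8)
The plan is to unwind the definitions in degree $0$ and verify the two constituent statements $(P_0)$ and $(Q_0)$ directly; there is essentially no computation, only careful bookkeeping of sources and targets. First I would record that every elementary $0$-chain has the shape $f[\varnothing]_y$ with $f\in\Cc(x,y)$, that it lives in the component $(C_0)_x$, and that $\partial_0(f[\varnothing]_y)=1$: this follows from $\Z\Cc$-linearity of $\partial_0$ together with the triviality of the $\Cc$-action on $\Z$, since $f[\varnothing]_y=f.(1_y[\varnothing]_y)$ and $\partial_0(1_y[\varnothing]_y)=1$. I would then make explicit the object-wise meaning of $s_{-1}$, which is only $\Z$-linear: for each $x\in\Ob(\Cc)$ one sets $(s_{-1})_x(1)=1_x[\varnothing]_x$, abbreviated $[\varnothing]_x$. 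Consequently $r_0=s_{-1}\circ\partial_0$ sends $f[\varnothing]_y\in(C_0)_x$ to $s_{-1}(1)=[\varnothing]_x$, which again lies in $(C_0)_x$, so that $r_0$ is a well-defined endomorphism of $(C_0)_x$.

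Property $(P_0)$ is then immediate: $\partial_0(r_0(f[\varnothing]_y))=\partial_0([\varnothing]_x)=1=\partial_0(f[\varnothing]_y)$. For $(Q_0)$ I would first observe that a $0$-cell has no first entry, so the irreducibility clause $A_1=\md(f\lcm(A))$ never applies; hence $f[\varnothing]_y$ is irreducible precisely when $f[\varnothing]_y=1_x[\varnothing]_x$, i.e. when $f=1_x$. In that case $r_0(1_x[\varnothing]_x)=[\varnothing]_x=1_x[\varnothing]_x$, which is the required equality. If instead $f[\varnothing]_y$ is reducible, then $f\neq 1_x$; using the convention $\lcm(\varnothing)=1_x$, the element $1_x=1_x\lcm(\varnothing)$ left-divides $f=f\lcm(\varnothing)$ (via $1_x\cdot f=f$), and this divisibility is strict because $f\neq 1_x$ and $\Cc$ has no nontrivial invertible morphisms. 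Thus $1_x\lcm(\varnothing)\prec f\lcm(\varnothing)$, and by the $n=0$ clause in the definition of $\sqsubset$ this reads $[\varnothing]_x\sqsubset f[\varnothing]_y$, that is $r_0(f[\varnothing]_y)\sqsubset f[\varnothing]_y$.

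The only genuinely delicate point is the correct reading of the maps in the categorical setting, not any inequality. One must interpret $s_{-1}$ object-wise so that it lands in the component $(C_0)_x$ attached to the common source $x$ of $f[\varnothing]_y$ and $[\varnothing]_x$, and one must fix the convention $\lcm(\varnothing)=1_x$ so that $\sqsubset$ and $\md$ are meaningful on $0$-chains. Once these conventions are in place the verification of $(P_0)$ and $(Q_0)$ is direct, and I expect no substantive obstacle in this base case.
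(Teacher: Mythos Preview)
Your proof is correct and follows essentially the same approach as the paper: compute $r_0(f[\varnothing])=[\varnothing]$ at the source object, verify $(P_0)$ by applying $\partial_0$ to both sides, and verify $(Q_0)$ by splitting into the cases $f=1$ (irreducible) and $f\neq 1$ (reducible, where strict left-divisibility $1\prec f$ gives $\sqsubset$). Your write-up is simply more explicit than the paper's about the object-wise meaning of $s_{-1}$ and the convention $\lcm(\varnothing)=1$; the only minor looseness is that you state this convention as $\lcm(\varnothing)=1_x$ uniformly, whereas for the cell $[\varnothing]_y$ one should take $\lcm(\varnothing)=1_y$ so that $f\cdot 1_y=f$ type-checks---but this does not affect the argument.
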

\begin{proof}
The mapping $r_0:=s_{-1}\circ \partial_0$ is $\Z$-linear with
\[r_0(f[\varnothing]_x)=s_1(\partial_0(f[\varnothing]_x))=[\varnothing]_y\]
for $f \in \Cc(y,x)$. Hence we obtain
\[\partial_0(r_0(f[\varnothing]_x))=\partial_0([\varnothing]_y)=1,~~\partial(f[\varnothing]_x)=f.1=1\]
because of the structure of the trivial $\Cc$-module $\Z$. Thus $(P_0)$ holds. For $(Q_0)$, we know that an elementary $0$-chain $f[\varnothing]_x$ is irreducible if and only if $f=1_x$, in which case we have $r_0(f[\varnothing]_x)=r_0([\varnothing]_x)=[\varnothing]_x$. Otherwise, we have $r_0(f[\varnothing]_x)=[\varnothing]_y\sqsubset f[\varnothing]_x$ by definition. Thus $(Q_0)$ also holds.
\end{proof}

We now assume that $(H_n)$ is satisfied, in particular we assume that both $\partial_n$ and $r_n$ have been constructed. We must now define
\[\partial_{n+1}:C_{n+1}\to C_n,~~s_n:C_{n}\to C_{n+1}, ~~r_{n+1}=s_n\circ \partial_{n+1}:C_{n+1}\to C_{n+1}\]
and show that $(H_{n+1})$ is satisfied. In the sequel, we use the notation $[\alpha,A]$ to write $(n+1)$-cells. By definition, saying that $[\alpha,A]$ is a $(n+1)$-cell amounts to saying that $A$ is a $n$-cell and $\alpha=\md(\lcm(\alpha,A))$. We denote by $\alpha_{/A}$ the morphism defined by the equation $(\alpha_{/A})\lcm(A)=\lcm(\alpha,A)$.

\begin{definition}\label{dl}
\begin{enumerate}[$\bullet$]
\item We define the morphism of $\Z\Cc$-module $\partial_{n+1}:C_{n+1}\to C_n$ by 
\[\partial_{n+1}([\alpha,A]):=\alpha_{/A}[A]-r_n(\alpha_{/A}[A])\]
\item We inductively define the $\Z$-linear map $s_n:C_n\to C_{n+1}$ by
\[s_n(f[A]):=\begin{cases} 0 & \text{if } f[A]\text{ is irreducible}\\ g[\alpha,A]+s_n(gr_n(\alpha_{/A}[A])) & \text{otherwise, with }\alpha=\md(f\lcm(A))\text{ and }f=g\alpha_{/A}\end{cases}\]
\item Finally, we define $r_{n+1}:C_{n+1}\to C_{n+1}$ by $r_{n+1}=s_n\circ \partial_{n+1}$.
\end{enumerate}
\end{definition}

We can first notice that, under these definitions, $\partial_{n+1}$, $s_n$ and $r_n$ all preserve the source of $n$-cells.

The definition of $\partial_{n+1}$ is direct (since $r_n$ has been constructed in order to satisfy property $(H_n)$). The definition of $s_n$ is inductive and we must check that it is well founded. Let $f[A]$ be a reducible chain. The chain $\alpha_{/A}[A]$ appearing in the definition of $s_n$ is also reducible since $\alpha<A_1$ holds by definition. Thus $(Q_n)$ gives $r_n(\alpha_{/A}[A])\sqsubset \alpha_{/A}[A]$ and
\[gr_n(\alpha_{/A}[A])\sqsubset g\alpha_{/A}[A]=f[A].\]
Since the relation $\sqsubset$ admits no infinite decreasing sequence, the definition of $s_n$ (and of $r_{n+1}$) is then well founded.

We can also check that $\partial_*$ induces a chain complex. Let $[\alpha,A]$ be a $(n+1)$-cell, we have
\[\partial_n\partial_{n+1}[\alpha,A]=\partial_n(\alpha_{/A}[A])-\partial_n(r_n(\alpha_{/A}[A]))=0\]
because of $(P_n)$. 

The following lemma is useful for showing that $(H_n)$ implies $(P_{n+1})$, but it also contains the information that $(s_n)$ provides a contracting homotopy for the Dehornoy-Lafont complex (see the proof of Proposition \ref{2.15}).

\begin{lem}\label{4.4} Let $f[A]$ be an elementary $n$-chain. Assuming $(H_n)$ we have
\[\partial_{n+1}s_n(f[A])=f[A]-r_n(f[A])\]
\end{lem}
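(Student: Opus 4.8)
The plan is to prove the identity by well-founded induction along the preordering $\sqsubset$, whose admissibility (absence of infinite decreasing sequences) is exactly what Lemma \ref{4.2} supplies. Since $\partial_{n+1}$, $s_n$ and $r_n$ are all $\Z$-linear, it suffices to establish the formula on an elementary chain $f[A]$, the general case following by linearity. I would then split according to the two branches in the definition of $s_n$.

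If $f[A]$ is irreducible, then $s_n(f[A])=0$ by definition, so the left-hand side vanishes; on the other hand $(Q_n)$ gives $r_n(f[A])=f[A]$, so the right-hand side $f[A]-r_n(f[A])$ vanishes as well, and the two agree. This branch uses no induction hypothesis and serves as the base case.

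Suppose now $f[A]$ is reducible; write $\alpha=\md(f\lcm(A))$ and $f=g\alpha_{/A}$, so that $s_n(f[A])=g[\alpha,A]+s_n(gr_n(\alpha_{/A}[A]))$. Applying $\partial_{n+1}$, using its $\Z\Cc$-linearity together with the defining formula $\partial_{n+1}([\alpha,A])=\alpha_{/A}[A]-r_n(\alpha_{/A}[A])$, the first summand contributes $f[A]-gr_n(\alpha_{/A}[A])$. For the second summand I first record that $gr_n(\alpha_{/A}[A])\sqsubset f[A]$: indeed $\alpha_{/A}[A]$ is reducible since $\alpha<A_1$, so $(Q_n)$ gives $r_n(\alpha_{/A}[A])\sqsubset\alpha_{/A}[A]$, and left-multiplication by $g$ preserves $\sqsubset$ by Lemma \ref{4.2}, whence $gr_n(\alpha_{/A}[A])\sqsubset g\alpha_{/A}[A]=f[A]$. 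Every elementary term of $gr_n(\alpha_{/A}[A])$ being strictly below $f[A]$, the induction hypothesis applies and yields $\partial_{n+1}s_n(gr_n(\alpha_{/A}[A]))=gr_n(\alpha_{/A}[A])-r_n(gr_n(\alpha_{/A}[A]))$. Adding the two contributions, the terms $gr_n(\alpha_{/A}[A])$ cancel and I am left with
\[\partial_{n+1}s_n(f[A])=f[A]-r_n(gr_n(\alpha_{/A}[A])).\]

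It then remains to identify $r_n(gr_n(\alpha_{/A}[A]))$ with $r_n(f[A])$, and this is the step I expect to carry the real content. Since $r_n=s_{n-1}\circ\partial_n$, it suffices to check $\partial_n(gr_n(\alpha_{/A}[A]))=\partial_n(f[A])$. Using the $\Z\Cc$-linearity of $\partial_n$ and then property $(P_n)$ applied to the elementary chain $\alpha_{/A}[A]$, one computes $\partial_n(gr_n(\alpha_{/A}[A]))=g\partial_n(r_n(\alpha_{/A}[A]))=g\partial_n(\alpha_{/A}[A])=\partial_n(g\alpha_{/A}[A])=\partial_n(f[A])$, invoking $g\alpha_{/A}=f$ at the last step. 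This closes the reducible case and completes the induction. The only genuinely delicate points are the well-foundedness of the induction, where Lemma \ref{4.2} and $(Q_n)$ are both needed to place $gr_n(\alpha_{/A}[A])$ strictly below $f[A]$, and the final appeal to $(P_n)$, which is precisely the compatibility that was engineered into the construction of $r_n$ to make this cancellation go through.
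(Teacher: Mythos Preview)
Your proof is correct and follows essentially the same route as the paper's own argument: a $\sqsubset$-induction splitting into the irreducible and reducible cases, with the reducible case handled by expanding $s_n$, applying the induction hypothesis to $gr_n(\alpha_{/A}[A])$, and then invoking $(P_n)$ to identify $r_n(gr_n(\alpha_{/A}[A]))$ with $r_n(f[A])$. If anything, your justification that $gr_n(\alpha_{/A}[A])\sqsubset f[A]$ is spelled out slightly more carefully than in the paper.
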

\begin{proof}
We use a $\sqsubset$-induction on $f[A]$. If $f[A]$ is irreducible, then by $(Q_n)$ we have
\[\partial_{n+1}s_n(f[A])=0=f[A]-r_n(f[A]).\]
Assume now that $f[A]$ is reducible. With the notation of Definition \ref{dl}, we obtain
\[\partial_{n+1}s_n(f[A])=g\partial_{n+1}([\alpha,A])-\partial_{n+1}s_n(gr_n(\alpha_{/A}[A])).\]
By $(Q_n)$, we have $gr_n(\alpha_{/A}[A])\sqsubset f[A]$, so the induction hypothesis gives us
\[\partial_{n+1}s_n(gr_n(\alpha_{/A}[A]))=gr_n(\alpha_{/A}[A])-r_n(gr_n(\alpha_{/A}[A])).\]
Applying $(P_n)$ we deduce
\begin{align*}
r_n(gr_n(\alpha_{/A}[A]))&=s_{n-1}(g\partial_n(r_n(\alpha_{/A}[A])))\\
&=s_{n-1}(g\partial_n(\alpha_{/A}[A]))\\
&=r_n(g\alpha_{/A}[A])=r_n(f[A]).
\end{align*}
And so
\[\partial_{n+1}s_n(f[A])=g\alpha_{/A}[A]-gr_n(\alpha_{/A}[A]+gr_n(\alpha_{/A}[A])-r_n(f[A])=f[A]-r_n(f[A])\]
as expected.
\end{proof}

\begin{lem}\label{lem:H_n->P_n+1}
Assuming $(H_n)$, $(P_{n+1})$ is satisfied.
\end{lem}
\begin{proof}
Let $[\alpha,A]$ be an elementary $(n+1)$-chain. We find
\begin{align*}
\partial_{n+1}(r_{n+1}(f[A]))&=\partial_{n+1}s_n\partial_{n+1}(f[A])\\
&=\partial_{n+1}(f[A])-r_n(\partial_{n+1}(f[A]))\\
&=\partial_{n+1}(f[A])-s_{n-1}\partial_n\partial_{n+1}(f[A])\\
&=\partial_{n+1}(f[A])
\end{align*}
\end{proof}

Before we show that $(H_n)$ implies $(H_{n+1})$, we need the following lemma, which substantiates the behavior of $s_n$ relative to $\sqsubset$.

\begin{lem}\label{poti}
Let $f[\alpha,A]$ be a reducible $(n+1)$-chain. For each reducible $n$-chain $g[B]$ satisfying $g\lcm(B)\preceq f\lcm(\alpha,A)$, we have $s_n(g[B])\sqsubset f[\alpha,A]$.
\end{lem}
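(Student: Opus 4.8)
The plan is to prove the statement by well-founded induction on the reducible $n$-chain $g[B]$ with respect to the preordering $\sqsubset$, which admits no infinite decreasing sequence by Lemma~\ref{4.2}. Since $g[B]$ is reducible, the definition of $s_n$ reads
\[s_n(g[B]) = h[\beta,B] + s_n\bigl(hr_n(\beta_{/B}[B])\bigr),\]
with $\beta=\md(g\lcm(B))$ and $g=h\beta_{/B}$. Because $\sqsubset$ extends to chains termwise, it suffices to show that each of the two summands is $\sqsubset f[\alpha,A]$, and I would handle them separately.

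For the first summand, the key identity is
\[h\lcm(\beta,B)=h\beta_{/B}\lcm(B)=g\lcm(B),\]
obtained from the defining relation $\beta_{/B}\lcm(B)=\lcm(\beta,B)$. Combined with the hypothesis $g\lcm(B)\preceq f\lcm(\alpha,A)$, this gives $h\lcm(\beta,B)\preceq f\lcm(\alpha,A)$. If the inequality is strict, then $h[\beta,B]\sqsubset f[\alpha,A]$ directly. If $h\lcm(\beta,B)=f\lcm(\alpha,A)$, then $g\lcm(B)=f\lcm(\alpha,A)$, so $\beta=\md(g\lcm(B))=\md(f\lcm(\alpha,A))$; here I would invoke reducibility of $f[\alpha,A]$: since $\alpha$ right-divides $\lcm(\alpha,A)$ and hence $f\lcm(\alpha,A)$, we have $\md(f\lcm(\alpha,A))\leq\alpha$, and reducibility makes this strict, so $\beta<\alpha$. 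Comparing first entries then yields $h[\beta,B]\sqsubset f[\alpha,A]$.

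For the second summand, I would first recall that $\beta_{/B}[B]$ is reducible (as $\beta<B_1$), so that $(Q_n)$ gives $r_n(\beta_{/B}[B])\sqsubset\beta_{/B}[B]$. Writing $r_n(\beta_{/B}[B])=\sum_i c_i[C_i]$ and multiplying on the left by $h$, the left-compatibility of $\sqsubset$ from Lemma~\ref{4.2} yields $hc_i[C_i]\sqsubset h\beta_{/B}[B]=g[B]$ for each $i$; in particular $hc_i\lcm(C_i)\preceq g\lcm(B)\preceq f\lcm(\alpha,A)$. Thus every term $hc_i[C_i]$ lies strictly $\sqsubset$-below $g[B]$ and satisfies the divisibility hypothesis of the lemma, so the induction hypothesis applies to the reducible terms (the irreducible ones are sent to $0$ by $s_n$) and gives $s_n(hc_i[C_i])\sqsubset f[\alpha,A]$. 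By $\Z$-linearity of $s_n$ the whole summand $s_n(hr_n(\beta_{/B}[B]))$ is $\sqsubset f[\alpha,A]$, which finishes the induction.

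The step I expect to be the main obstacle is the equality case of the first summand: deriving $\beta<\alpha$ relies essentially on the reducibility of $f[\alpha,A]$ and on $\md$ selecting the $<$-least atomic right-divisor, and one must keep careful track of the fact that $\md(g\lcm(B))$ and $\md(f\lcm(\alpha,A))$ agree exactly when the two left-multiples coincide. A secondary point requiring care is the well-foundedness of the induction: this is legitimate precisely because the recursion defining $s_n$ descends strictly in $\sqsubset$ (mirroring the induction) and because the divisibility condition $\,\cdot\preceq f\lcm(\alpha,A)$ is inherited by the smaller chains $hc_i[C_i]$, so the induction hypothesis is always available where it is used.
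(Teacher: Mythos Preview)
Your proposal is correct and follows essentially the same route as the paper's proof: $\sqsubset$-induction on $g[B]$, expansion of $s_n(g[B])$ via its recursive definition, handling the leading term $h[\beta,B]$ by the strict/equality dichotomy on $g\lcm(B)\preceq f\lcm(\alpha,A)$ (using reducibility of $f[\alpha,A]$ to force $\beta<\alpha$ in the equality case), and pushing the remaining terms through the induction hypothesis after applying $(Q_n)$ and left-compatibility of $\sqsubset$. The only cosmetic difference is that the paper phrases ``$hr_n(\beta_{/B}[B])\sqsubset g[B]$'' as already known from the well-foundedness discussion of $s_n$, whereas you re-derive it via $\beta<B_1$ and $(Q_n)$; both are the same argument.
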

\begin{proof}
We use $\sqsubset$-induction on $g[B]$. By definition we have
\[s_n(g[B])=h[\beta,B]+s_n\left(\sum h_i[C_i]\right)\]
with $\beta=\md(g\lcm(B))$, $g\lcm(B)=h\lcm(\beta,B)$ and $\sum h_i[C_i]=hr_n(\beta_{/B}[B])$. Furthermore, since we know that $hr_n(\beta_{/B}[B])\sqsubset g[B]$, we always have $h_i[C_i]\sqsubset g[B]$, hence in particular, $h_i\lcm(C_i)\preceq g\lcm(B)\preceq f\lcm(\alpha,A)$. The induction hypothesis gives $s_n(h_i[C_i])\sqsubset f[\alpha,A]$ if $h_i[C_i]$ is reducible. If $h_i[C_i]$ is irreducible, then the contribution of $s_n(h_i[C_i])=0$ to the sum defining $s_n(hr_n(\beta_{/B}[B])$ is trivial. In both cases, it only remains to compare $h[\beta,B]$ and $f[\alpha,A]$.

Two cases are possible. Assume first $g\lcm(B)\prec f\lcm(\alpha,A)$. By construction, we have $h\lcm(\beta,B)=g\lcm(B)$, so we deduce $h\lcm(\beta,B)\prec f\lcm(\alpha,A)$ and therefore $h[\beta,B]\sqsubset f[\alpha,A]$.

Assume now $g\lcm(B)=f\lcm(\alpha,A)$. By construction, $\beta$ is the $<$-least right-divisor of $g\lcm(B)$, hence of $f\lcm(\alpha,A)$. The hypothesis that $f[\alpha,A]$ is reducible means that $\alpha$ is a right-divisor of the latter element, but is not its least right-divisor, so we must have $\beta<\alpha$. This gives $h[\beta,B]\sqsubset f[\alpha,A]$ by definition. 
\end{proof}

\begin{lem}
Assuming $(H_n)$, $(H_{n+1})$ is satisfied.
\end{lem}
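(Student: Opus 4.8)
The plan is as follows. Since $(P_{n+1})$ has already been established in the previous lemma, it remains only to verify $(Q_{n+1})$: that $r_{n+1}(f[\alpha,A])=f[\alpha,A]$ when $f[\alpha,A]$ is irreducible, and $r_{n+1}(f[\alpha,A])\sqsubset f[\alpha,A]$ when it is reducible. The common starting point is the expansion obtained from the $\Z\Cc$-linearity of $\partial_{n+1}$ and the identity $r_{n+1}=s_n\circ\partial_{n+1}$, namely
\[
r_{n+1}(f[\alpha,A])=s_n(f\alpha_{/A}[A])-s_n(fr_n(\alpha_{/A}[A])),
\]
where I use $f\alpha_{/A}\lcm(A)=f\lcm(\alpha,A)$.

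For the irreducible case, where $\alpha=\md(f\lcm(\alpha,A))$, I would first note that $f\alpha_{/A}[A]$ is itself reducible: since $\alpha<A_1$ by the cell condition, we have $A_1\neq\alpha=\md(f\alpha_{/A}\lcm(A))$. Unwinding the definition of $s_n$ applied to $f\alpha_{/A}[A]$, the $<$-least right-divisor $\beta$ of $f\alpha_{/A}\lcm(A)=f\lcm(\alpha,A)$ is exactly $\alpha$, whence $\beta_{/A}=\alpha_{/A}$ and right-cancellativity forces the cofactor to be $f$. This gives $s_n(f\alpha_{/A}[A])=f[\alpha,A]+s_n(fr_n(\alpha_{/A}[A]))$, and substituting into the expansion above cancels the two $s_n(fr_n(\alpha_{/A}[A]))$ terms, leaving $r_{n+1}(f[\alpha,A])=f[\alpha,A]$.

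For the reducible case I would bound the two summands separately using Lemma \ref{poti}. For the second summand, the chain $\alpha_{/A}[A]$ is reducible (again because $\alpha<A_1$), so $(Q_n)$ yields $r_n(\alpha_{/A}[A])\sqsubset\alpha_{/A}[A]$, and left-compatibility of $\sqsubset$ (Lemma \ref{4.2}) shows every elementary chain $g[B]$ occurring in $fr_n(\alpha_{/A}[A])$ satisfies $g\lcm(B)\preceq f\lcm(\alpha,A)$; applying Lemma \ref{poti} term by term (irreducible terms contribute $0$) gives $s_n(fr_n(\alpha_{/A}[A]))\sqsubset f[\alpha,A]$. For the first summand I must verify that $f\alpha_{/A}[A]$ is reducible so that Lemma \ref{poti} applies directly: both $\alpha$ and $A_1$ right-divide $f\lcm(\alpha,A)$, so $\beta:=\md(f\lcm(\alpha,A))$ satisfies $\beta\leq\alpha<A_1$, forcing $A_1\neq\beta$ and hence reducibility; since the associated multiple equals $f\lcm(\alpha,A)$, Lemma \ref{poti} gives $s_n(f\alpha_{/A}[A])\sqsubset f[\alpha,A]$. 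Combining the two bounds yields $r_{n+1}(f[\alpha,A])\sqsubset f[\alpha,A]$.

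The main obstacle is the bookkeeping in the reducible case, specifically confirming that the first summand $s_n(f\alpha_{/A}[A])$ genuinely falls under the hypotheses of Lemma \ref{poti}. One must \emph{prove} the reducibility of $f\alpha_{/A}[A]$ through the inequality $\beta\leq\alpha<A_1$ rather than assume it, and one must check that the bound $g\lcm(B)\preceq f\lcm(\alpha,A)$ holds (here with equality) so that the lemma is applicable. Once these reducibility and ordering checks are in place, the rest reduces to the substitution described above.
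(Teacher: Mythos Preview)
Your proof is correct and follows essentially the same route as the paper: expand $r_{n+1}(f[\alpha,A])=s_n(f\alpha_{/A}[A])-s_n(fr_n(\alpha_{/A}[A]))$, dispatch the irreducible case by unwinding one step of the recursion for $s_n$, and handle the reducible case by bounding each summand via Lemma~\ref{poti} after invoking $(Q_n)$ and left-compatibility of $\sqsubset$. If anything, you are slightly more explicit than the paper in verifying that $f\alpha_{/A}[A]$ is reducible (via $\beta\le\alpha<A_1$) before invoking Lemma~\ref{poti}, a hypothesis the paper uses tacitly.
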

\begin{proof}
By Lemma \ref{lem:H_n->P_n+1}, it only remains to prove $(Q_{n+1})$. Let $f[\alpha,A]$ be an elementary $(n+1)$-chain. By definition, we have
\[r_{n+1}(f[\alpha,A])=s_n(f\alpha_{/A}[A])-s_n\left(\sum g_i[B_i]\right)\]
with $\sum g_i[B_i]=fr_n(\alpha_{/A}[A])$. If $f[\alpha,A]$ is irreducible, then we have $\alpha=\md(x\lcm(\alpha,A))$. The definition of $s_n$ gives
\[s_n(f\alpha_{/A}[A])=f[\alpha,A]+s_n\left(\sum g_i[B_i]\right)\]
and we deduce $r_{n+1}(f[\alpha,A])=f[\alpha,A]$.

Assume now that $f[\alpha,A]$ is reducible. First, we have $f\alpha_{/A}\lcm(A)=f\lcm(\alpha,A)$, so Lemma \ref{poti} gives $s_n(f\alpha_{/A}[A])\sqsubset f[\alpha,A]$. Since $\alpha=\md(\lcm(\alpha_{/A},A)<A_1$, the chain $\alpha_{/A}[A]$ is reducible, so property $(Q_n)$ gives $r_n(\alpha_{/A}[A])\sqsubset \alpha_{/A}[A]$. Hence Lemma \ref{4.2} gives $xr_n(\alpha_{/A}[A])\sqsubset f\alpha_{/A}[A]$, i.e, $g_i[B_i]\sqsubset f\alpha_{/A}[A]$ for all $i$. This implies in particular $g_i\lcm(B_i)\preceq f\alpha_{/A}\lcm(A)=f\lcm(\alpha,A)$. Applying Lemma \ref{poti} to $g_i[B_i]$ gives $s_n(g_i[B_i])\sqsubset f[\alpha,A]$. We deduce that $r_{n+1}(f[\alpha,A])\sqsubset f[\alpha,A]$, which is property $(Q_{n+1})$.
\end{proof}

Thus, the induction hypothesis is maintained, and the construction can be carried out. We can now state the main result of this section: the Dehornoy-Lafont complex for a Gaussian category provides a free resolution of the trivial module.

\begin{prop}\label{2.15}
Let $\Cc$ be a locally left-Gaussian category. The complex $(C_*,\partial_*)$ is a free resolution of the trivial $\Z\Cc$-module $\Z$.
\end{prop}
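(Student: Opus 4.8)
The plan is to recognize that nearly all the substantive work has already been carried out in the preceding lemmas, so that what remains is simply to package the maps $s_n$ into a contracting homotopy. Recall that each $C_n$ is by construction a free $\Z\Cc$-module, hence projective by the lemma on projectivity of free modules, and that we have already verified $\partial_n\partial_{n+1}=0$, so that $(C_*,\partial_*)$ augmented by $\partial_0:C_0\to\Z$ is a chain complex. It therefore suffices to prove that the augmented complex
\[\cdots\to C_2\xrightarrow{\partial_2} C_1\xrightarrow{\partial_1} C_0\xrightarrow{\partial_0}\Z\to 0\]
is exact. Since $\Z\Cc-\modu$ is a functor category into $\Ab$, in which kernels and images are computed objectwise, exactness may be checked object by object; and because the maps $s_n$ (and $s_{-1}$) are merely $\Z$-linear, it is enough to produce a $\Z$-linear contracting homotopy of the underlying complexes of abelian groups. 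Note that each $(C_n)_x$ is freely generated as an abelian group by the elementary $n$-chains of source $x$, and that $\partial_*$, $s_*$ preserve sources, so this objectwise reduction is compatible with the construction.

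First I would record the contracting homotopy identity. Rewriting $r_n=s_{n-1}\circ\partial_n$, Lemma \ref{4.4} states precisely that for every elementary $n$-chain $f[A]$,
\[\partial_{n+1}s_n(f[A])+s_{n-1}\partial_n(f[A])=f[A],\]
and since elementary chains generate $C_n$ as an abelian group while all three maps are $\Z$-linear, this extends to the identity $\partial_{n+1}s_n+s_{n-1}\partial_n=\Id_{C_n}$ for every $n\geqslant 0$. At the bottom of the complex, the definitions $\partial_0([\varnothing]_x)=1$ and $s_{-1}(1)=[\varnothing]_x$ give $\partial_0 s_{-1}=\Id_\Z$ objectwise, so the augmentation is surjective.

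From this the exactness follows in the usual way. For $n\geqslant 1$, if $c\in\Ker\partial_n$ then $c=\partial_{n+1}s_n(c)+s_{n-1}\partial_n(c)=\partial_{n+1}s_n(c)\in\ima\partial_{n+1}$, whence $\Ker\partial_n\subseteq\ima\partial_{n+1}$; combined with $\ima\partial_{n+1}\subseteq\Ker\partial_n$ coming from $\partial_n\partial_{n+1}=0$, this yields exactness at $C_n$. The same computation at $n=0$, using $\partial_0 s_{-1}=\Id_\Z$, gives $\Ker\partial_0=\ima\partial_1$ together with surjectivity of $\partial_0$, hence exactness at $C_0$ and at $\Z$. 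Thus $(C_*,\partial_*)$ is an exact complex of free modules augmenting to $\Z$, that is, a free resolution of the trivial module.

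I do not expect a genuine obstacle here: the inductive construction of $\partial_*$, $s_*$, $r_*$ and the verification of $(H_n)$ (culminating in Lemma \ref{4.4}) constitute the hard part, and they are already in hand. The only point requiring care is the observation that the homotopy is only $\Z$-linear, so one must pass to the underlying complexes of abelian groups and invoke the objectwise computation of exactness in the functor category, while the freeness needed for the resolution to be projective was established separately.
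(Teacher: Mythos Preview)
Your proof is correct and follows exactly the same approach as the paper: rewrite Lemma~\ref{4.4} (using $r_n=s_{n-1}\partial_n$) as the contracting-homotopy identity $\partial_{n+1}s_n+s_{n-1}\partial_n=\Id$, and conclude exactness. You supply more detail than the paper's three-line proof---in particular the objectwise reduction and the care about $\Z$-linearity of $s_*$---but the argument is the same.
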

\begin{proof}
We already have seen that $(C_*,\partial_*)$ is a complex of $\Z\Cc$-modules. The formula of Lemma \ref{4.4} rewrites into
\[\partial_{n+1}s_n+s_{n+1}\partial_n=1\]
which shows that $s_*$ is a contracting homotopy.
\end{proof}

Combining this proposition and Theorem \ref{2.16}, we get an analogue for groupoids. This is the result we will use in Section \ref{sec:3}.

\begin{prop}
Let $\Cc$ be a cancellative left-Gaussian category. The complex \newline $(\Z\Gg(\Cc)\otimes_{\Z\Cc}C_*,\partial_*)$ is a free resolution of the trivial $\Z\Gg$-module $\Z$.
\end{prop}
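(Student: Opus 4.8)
The plan is to obtain this resolution from the previous one by applying the scalar inversion functor $\Z\Gg\otimes_\Cc-$ and exploiting its flatness. The previous proposition provides that the augmented complex $\cdots\to C_1\to C_0\to\Z\to 0$ is exact and that each $C_n$ is free over $\Z\Cc$; the whole point of strengthening the hypothesis from locally left-Gaussian to cancellative left-Gaussian is to be able to invert scalars.

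First I would check that a cancellative left-Gaussian category $\Cc$ is a left-Ore category in the sense of Definition \ref{leftore}. Cancellativity is assumed, and since $\Cc$ admits genuine (non-conditional) left-lcms, any two morphisms with the same target possess a common left-multiple; together these are exactly the defining conditions of a left-Ore category. Theorem \ref{2.16} then applies and shows that $\Z\Gg$ is flat as a $\Cc$-module, so that the functor $\Z\Gg\otimes_\Cc-$ is exact.

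Applying this exact functor to the augmented resolution yields an exact sequence $\cdots\to\Z\Gg\otimes_\Cc C_1\to\Z\Gg\otimes_\Cc C_0\to\Z\Gg\otimes_\Cc\Z\to 0$, so $(\Z\Gg\otimes_\Cc C_*,\partial_*)$ is a resolution of $\Z\Gg\otimes_\Cc\Z$; note that the contracting homotopy $s_*$, being only $\Z$-linear, need not survive, but exactness of the functor suffices. It then remains to make two identifications. For freeness, I would use that $C_n$ is the free $\Cc$-module on the family $\{(\Xx_n)_x\}_{x\in\Ob(\Cc)}$, equivalently a direct sum of representables $\Z\Cc(-,y)$ indexed by the cells. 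The free-module functor commutes with scalar inversion: both $\Z\Gg\otimes_\Cc F(-)$ and the free $\Z\Gg$-module functor are left adjoints, the former to the composite of restriction with the forgetful functor $\Z\Cc-\modu\to\Set^{\Ob(\Cc)}$, the latter to the forgetful functor $\Z\Gg-\modu\to\Set^{\Ob(\Gg)}$; these right adjoints coincide because $\Ob(\Cc)=\Ob(\Gg)$ and restriction leaves the underlying family of groups unchanged. By uniqueness of adjoints, $\Z\Gg\otimes_\Cc C_n$ is the free $\Z\Gg$-module on $\{(\Xx_n)_x\}_{x\in\Ob(\Cc)}$, hence free.

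Finally I would identify $\Z\Gg\otimes_\Cc\Z$ with the trivial $\Gg$-module. Unwinding the tensor product, $(\Z\Gg\otimes_\Cc\Z)_x$ is the free abelian group on $\Gg(x,-)$ modulo the relations $\gamma d\sim\gamma$ for $d\in\Cc$, since the $\Cc$-action on the trivial module is trivial. Writing an arbitrary $\gamma\in\Gg(x,z)$ as a fraction $f^{-1}g$ with $f,g\in\Cc$ (available because $\Cc$ is left-Ore), the relation gives first $[1_x]=[f^{-1}f]=[f^{-1}]$ and then $[\gamma]=[f^{-1}g]=[f^{-1}]=[1_x]$; as no relation collapses a generator to zero, the quotient is $\Z$, and the induced $\Gg$-action is trivial. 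Hence $\Z\Gg\otimes_\Cc\Z\cong\Z$, and $(\Z\Gg\otimes_\Cc C_*,\partial_*)$ is a free resolution of the trivial $\Z\Gg$-module. I expect the main obstacle to be this last bookkeeping — establishing freeness through the commutation of the free and inversion functors, and identifying the augmentation via the fraction computation — whereas the flatness input itself is immediate from Theorem \ref{2.16} once the left-Ore property is verified.
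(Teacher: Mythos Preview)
Your proposal is correct and follows exactly the approach the paper indicates: the paper merely states ``Combining this proposition and Theorem \ref{2.16} we get'' and gives no further argument, whereas you spell out the three necessary verifications (that cancellative left-Gaussian implies left-Ore so Theorem \ref{2.16} applies, that scalar inversion carries free $\Cc$-modules to free $\Gg$-modules, and that $\Z\Gg\otimes_\Cc\Z\cong\Z$ via the fraction description). Your added detail is sound, and in particular the adjoint-uniqueness argument for freeness and the fraction computation for the augmentation are correct.
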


\subsection{Reduction of computations}\label{mini}
Let $\Cc$ be a locally left-Gaussian category, and let $\Aa$ be a finite set of morphisms of $\Cc$ which generates $\Cc$. The Dehornoy-Lafont complex depends on the one hand on the structure of $\Cc$ as a category, and on the other hand on the linear order chosen on $\Aa$. Here we propose a computationally efficient solution for constructing an order on $\Aa$ yielding few $2$-cells. This order is not optimal a priori (even for minimizing the number of 2-cells), but it gives good results in practice.

Let $x$ be an object of $\Cc$. We first consider the set $L_x$ of all elements of $\Cc(-,x)$ which are the lcm of a pair of distinct elements of $\Aa$. Our strategy is, for each $\ell\in L_x$, to try to reduce the number of two cells $[a,b]$ with $a\vee b=\ell$.

Let $\ell$ be in $L_x$. One can consider $\Aa_\ell$ the set of elements of $\Aa$ which right-divide $\ell$. This set is included in $\subset\Aa(-,x)$ by definition. For $a\in \Aa_\ell$, we set $n(a,\ell)$ to be the cardinality of the following set
\[\{b\in \Aa_\ell~|~ a\vee b=\ell\}.\]
If $a$ is the $<$-minimum of $\Aa_\ell$, then there are precisely $n(a,\ell)$ $2$-cells of the form $[a,b]$ with $a\vee b=\ell$. In particular we deduce the following lemma.

\begin{lem}\label{4.1}
Let $x$ be an object of $\Cc$. A lower (resp. upper) bound for the number of $2$-cells made of elements of $\Aa$ with target $x$ is given by
\[\sum_{\ell\in L_x} \min_{a\in \Aa_\ell} n(a,\ell)~\left(resp.~\sum_{\ell\in L_x} \max_{a\in \Aa_\ell} n(a,\ell)\right)\]
\end{lem}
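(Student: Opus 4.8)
The plan is to compute the number of $2$-cells with target $x$ exactly, for an arbitrary fixed linear order, and then to bound that count term by term over $L_x$; the bounds will then hold regardless of the order chosen. The key reduction, already isolated in the discussion preceding the statement, is that a $2$-cell $[\alpha_1,\alpha_2]$ is rigidly determined by its left-lcm $\ell:=\alpha_1\vee\alpha_2$ together with its top entry $\alpha_2$. Indeed the cell condition at index $1$ forces $\alpha_1=\md(\ell)$, while the condition at index $2$, namely $\alpha_2=\md(\alpha_2)$, is automatic (it is just the statement that $[\alpha_2]$ is a $1$-cell). Since $\ell$ is the left-lcm of the distinct atoms $\alpha_1,\alpha_2$ and has target $x$, it lies in $L_x$, and this left-lcm is unique because $\Cc$ is right-cancellative with $\Cc^\times$ trivial. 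Hence the $2$-cells with target $x$ partition into disjoint blocks indexed by $\ell\in L_x$, the block of $\ell$ consisting of the cells that \emph{arise} from $\ell$.

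First I would count the block of a fixed $\ell\in L_x$. Its elements are exactly the pairs $[\md(\ell),\alpha_2]$ with $\md(\ell)<\alpha_2$ and $\md(\ell)\vee\alpha_2=\ell$, so the assignment $[\md(\ell),\alpha_2]\mapsto\alpha_2$ maps the block into $\{\alpha_2\in\Aa_x\mid\md(\ell)\vee\alpha_2=\ell\}$. I would then verify that the two extra constraints come free of charge: any $\alpha_2$ in the target set right-divides $\ell$, hence lies in $\Aa_\ell$, so $\md(\ell)\le\alpha_2$ by minimality of $\md(\ell)$ in $\Aa_\ell$; and $\alpha_2\neq\md(\ell)$ since $\md(\ell)\vee\md(\ell)=\md(\ell)\neq\ell$. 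The assignment is therefore a bijection, and the block of $\ell$ has cardinality $n(\md(\ell),\ell)$.

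Summing over the blocks gives the exact count $\sum_{\ell\in L_x}n(\md(\ell),\ell)$ for the chosen order. To finish, I would note that $\md(\ell)\in\Aa_\ell$, so each summand is squeezed as $\min_{a\in\Aa_\ell}n(a,\ell)\le n(\md(\ell),\ell)\le\max_{a\in\Aa_\ell}n(a,\ell)$; summing these inequalities over $\ell\in L_x$ produces the announced lower and upper bounds.

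The only step needing genuine care is the rigidity claim of the first paragraph: that the first coordinate of any arising $2$-cell is forced to be $\md(\ell)$, and conversely that every $\alpha_2$ enumerated by $n(\md(\ell),\ell)$ does produce a bona fide $2$-cell (the inequality $\md(\ell)<\alpha_2$ and the index-$2$ divisibility both holding automatically). Everything after that is bookkeeping. I would also point out that these bounds are in general not attained, because a single global order on $\Aa$ cannot realize a prescribed minimizer $\md(\ell)$ simultaneously for all $\ell\in L_x$ — which is exactly why the paper describes the order it constructs as non-optimal.
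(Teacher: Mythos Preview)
Your argument is correct and follows exactly the approach the paper takes: the sentence immediately preceding the lemma states that if $a$ is the $<$-minimum of $\Aa_\ell$ then there are precisely $n(a,\ell)$ $2$-cells arising from $\ell$, and the lemma is deduced from this without further comment. Your write-up simply unpacks that sentence in detail --- the bijection between the block of $\ell$ and the set defining $n(\md(\ell),\ell)$, the automatic inequality $\md(\ell)<\alpha_2$, and the termwise bounding --- so there is nothing substantively different between the two.
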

In practice, these bounds may or may not be reached. 

\begin{definition}
Let $x$ be an object of $\Cc$ and let $\ell\in L_x$. For $a\in \Aa_\ell$, the \nit{condition} on $\Aa(-,x)$ associated to $a$ and $\ell$ is the set-theoretic relation
\[\{(a,b)~|~b\in \Aa_\ell\}\subset \Aa(-,x)\times \Aa(-,x)\]
We say that such a condition is \nit{optimal} if we furthermore have
\[n(a,\ell)=\min_{b\in \Aa_\ell}n(b,\ell).\]
We say that a family of conditions is \nit{compatible} if their union is a subrelation of an order on $\Aa(-,x)$.
\end{definition}
We can first check that the reflexive closure of a condition is always an order. It is also obvious that two different conditions associated to a same element $\ell\in L_x$ are never compatible: it would mean that $\Aa_\ell$ has two distinct minima.

Testing if a family of conditions is compatible only amounts to testing whether or not its reflexive transitive closure is antisymmetric. This is easy to test in practice due to the form of conditions as set-theoretic relations.

In order to get an adequate order, we try and find a maximal family of compatible conditions yelding few $2$-cells. We propose the following procedure (a detailed code is available at \url{https://github.com/ogarnier/dehornoy_lafont_computations.git}) :
\begin{enumerate}[1.]
\item Set $C:=\varnothing$.
\item Compute $Comp(C)$ the set of conditions on $\Aa(-,x)$ which are compatible with $C$.
\item While $Comp(C)\neq \varnothing$ do \begin{enumerate}[-]
\item Choose a condition $(a,\ell)$ in $Comp(C)$ which minimizes the quantity 
\[n(a,\ell)-\min_{b\in \Aa_\ell} n(b,l).\]
\item Add $(a,\ell)$ to $C$.
\end{enumerate}
\end{enumerate}
The result of this procedure is a maximal family of compatible conditions. This family then induces an order which can be refined into a linear order over $\Aa(-,x)$. Since a choice is made at each step of this procedure, it is very hard to check wether or not the resulting order is optimal for minimizing the number of $2$-cells. The condition chosen at the $n$-th step could for instance be incompatible with an optimal condition at the $n+1$-th step, which would force us to fall back on a less good condition. This would give us an order with more $2$-cells that if we reversed the steps $n+1$ and $n$. Nevertheless, we will see in the next section that this procedure gives rather good results in practice.


Another computational issue arising from the Dehornoy-Lafont complex is the computation of the differential. Since this differential is defined recursively and using the auxiliary morphisms $r_n$ and $s_n$, its calculation may lead to a lot of redundancy.

A first solution is to stock the results of $\partial_n$ applied on the cells and then use the $\Z\Cc$-linearity of $\partial_n$. Unfortunately, one cannot do the same for $s_n$ and $r_n$ as they are not $\Z\Cc$-linear, but only $\Z$-linear. One would theoretically have to stock the results of $r_n$ and $s_n$ applied to every elementary chain, and not only to cells. But, in practice, $r_n$ need only be calculated on chains of the form $\alpha_{/A}[A]$, where $[\alpha,A]$ is a cell (see Definition \ref{dl}). So we can also store the results of $r_n$ on chains of this form to avoid redundant computations.

\section{Homology computations for exceptional complex braid groups}\label{sec:3}

We are now going to use the Dehornoy-Lafont complex to compute the homology of exceptional complex braid groups. Recall that a complex reflection group $W$ is a finite subgroup of $\GL_n(\C)$ generated by (pseudo-)reflections, that is finite order automorphisms of $\C^n$ which pointwise fixes some hyperplane (see \cite{lehrertaylor}). We associate to $W$ the complementary $X$ of the arrangement of all reflecting hyperplanes associated to the reflections of $W$. The action of $W$ on $X$ is free and induces a covering map from $X$ to $X/W$. The complex braid group $B(W)$ (resp. the pure complex braid group $P(W)$) is then defined as the fundamental group of $X/W$ (resp. of $X$). The braid group $B(W)$ is generated by so-called braid reflections, and it admits a length morphism $B(W)\to \Z$, defined by $\sigma\mapsto 1$ for every braid reflections $\sigma$ (\cite[Proposition 2.2 and Proposition 2.16]{bmr}).

Complex reflection groups are known to behave in a ``semi-simple'' way: they can be decomposed as products of irreducible groups (meaning that their representations as subgroups of $\GL_n(\C)$ are irreducible). The same goes for complex braid groups, meaning that we only need to consider braid groups of irreducible complex reflection groups.  

Irreducible complex reflection groups were classified in 1954 by Shephard and Todd, with an infinite series $G(de,e,n)$ depending on three integer parameters, and 34 exceptional groups, labelled $G_4,\ldots,G_{37}$. Note, however, that two distinct reflection groups can have isomorphic braid groups, for instance, the braid groups of $G_{7}$ and $G(12,2,2)$ are isomorphic. This will prove useful in avoiding redundant computations (see Section \ref{isodi}).

The braid group of the exceptional group $G_k$ will be denoted by $B_k$, it should not be confused with the classical braid group on $k$ strands, which will not be considered here. We will restrict our attention to exceptional groups and their braid groups. 

As the construction of the Dehornoy-Lafont complex relies on some underlying Gaussian category (or monoid), we will use various Garside structures for exceptional braid groups (see Remark \ref{gargauss}).

Once we have computed the Dehornoy-Lafont complex, we will use it to compute the homology of $B(W)$ with coefficients in the following modules $M$:
\begin{enumerate}[-]
\item $M=\Z$ is the trivial $B(W)$-module.
\item $M=\Z$, where the braid reflections of $B(W)$ act by $-1$. We denote this module by $\Z_\epsi$, we call it the \nit{sign representation} of $B(W)$.
\item $M=k[t,t^{-1}]$, where $k$ is a field and the braid reflections of $B(W)$ act by $t$. We consider the case where $k=\Q$ or $k$ is some finite field.
\end{enumerate}
As pointed out in \cite{calmil}, the homology $H_*(B(W),k[t,t^{-1}])$ where $k$ is a field can be identified with the homology of the Milnor fiber of the singularity corresponding to $W$.

\subsection{Isodiscriminantality}\label{isodi} Let $W$ and $W'$ be two irreducible complex reflection groups. By the Chevalley-Shephard-Todd Theorem (see \cite[Theorem 3.20]{lehrertaylor}), one can choose a family of homogeneous polynomials $f_1,\ldots,f_n$ such that the algebra $\C[X_1,\ldots,X_n]^W$ of $W$-invariant polynomials is a polynomial algebra generated by $f_1,\ldots,f_n$. The sequence $f_1,\ldots,f_n$ is called a system of basic invariants for $W$. The polynomial map $f=(f_1,\ldots,f_n):\C^n\to \C^n$ induces a map $\hhat{f}:\C^n/W\to \C^n$ which sends $X/W$ to the complementary of an algebraic hypersurface $\Hh$. The hypersurface $\Hh$ is the image under $f$ of the union of the reflecting hyperplanes.

Consider $W,W'\leqslant \GL_n(\C)$ two complex reflection groups with two systems of basic invariants $f_1,\ldots,f_n$ and $f_1',\ldots,f_n'$ for which the associated discriminant hypersurfaces are the same. This choice induces a homeomorphism between the associated regular orbit spaces, and an isomorphism $B(W)\simeq B(W')$ which sends braid reflections to braid reflections.
This last point gives that the actions of $B(W)$ and $B(W')$ on $\Z_\epsi$ and $k[t,t^{-1}]$ are the same and we only need to compute the associated homology for one representant of the isodiscriminantality class.

\subsection{Coxeter groups and Artin groups}\label{sec:coxeter_groups}
The first case we are going to consider is that of complexified real reflection groups. We refer to \cite[Section IV.1]{boulie} for classical results about Coxeter groups and real reflection groups.

Consider $(W,S)$ a Coxeter system of spherical type. For $s,t\in S$, we denote by $m_{s,t}$ the order of $st$ in $W$. The \nit{Artin group} associated to $(W,S)$ is defined by the following presentation
\[A(W):=\left\langle S~|~  \langle s,t\rangle^{m_{s,t}}=\langle t,s\rangle^{m_{t,s}}~\forall s\neq t\right\rangle\]
where $\langle x,y\rangle^m$ denotes the product $xyxy...$ with $m$ terms. 

It is known (see \cite{brieskorn}) that the braid group of $W$ seen as a complex reflection group is isomorphic to $A(W)$ (and this isomorphism sends the elements of $S$ to braid reflections).

The presentation of $A(W)$ also gives rise to a monoid, denoted by $M(W)$. The monoid $M(W)$ is always locally left-Gaussian, and since $(W,S)$ is of spherical type, it is Gaussian (see \cite[Example 1]{dehpar}). The monoid $M(W)$ is the \nit{Artin monoid} associated to the Coxeter system $(W,S)$. 

The homology of Artin monoids has already been studied by Salvetti (see \cite{salvetti}), and by Squier (see  \cite{squier}). The approach of the latter was then generalized in \cite[Section 4]{dehlaf} into the order complex.

As we want to use solely the Dehornoy-Lafont complex in our computations, we give its construction in the case of a Coxeter group.

If $S'$ is a subset of $S$, then the subgroup $W'$ generated by $S'$ in $W$ is also a spherical Coxeter group. The Artin monoid $M'$ associated to $(W',S')$ is the submonoid generated by $S'$ in $M$. In particular the right-lcm of the elements of $S'$ lies inside $M'$. So the atoms right-dividing this lcm are precisely the elements of $S'$. We get the following lemma:

\begin{lem} Let $(S,W)$ be a Coxeter system of spherical type, with $S=\{s_1,\ldots,s_n\}$. For $k$ be a positive integer, the $k$-cells for the Dehornoy-Lafont complex associated to $(S,W)$ are given by
\[X_k:=\left\{[s_{i_1},\ldots,s_{i_k}]~|~ 1\leqslant i_1<\ldots<i_k\leqslant n\right\}.\]
In particular the cardinality of $X_k$ is $\binom{n}{k}$ and does not depend on the choice of an order of the atoms.
\end{lem}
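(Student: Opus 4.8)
The plan is to show that, once the generating set is taken to be $\Aa=S$, the defining conditions of a $k$-cell become automatic for every strictly $<$-increasing $k$-tuple of atoms, so that the set of $k$-cells coincides with the set of $k$-element subsets of $S$ written in increasing order.

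First I would dispose of the structural conditions. Since the Artin monoid $M(W)$ has a single object, the requirement that the entries of a cell share a common target is vacuous, and a candidate $k$-cell is simply a tuple $[\alpha_1,\ldots,\alpha_k]$ of pairwise distinct elements of $S$ with $\alpha_1<\cdots<\alpha_k$. Such tuples are in bijection with the $k$-element subsets of $S$, so all that remains is to verify the divisibility condition $\alpha_i=\md(\lcm(\alpha_i,\ldots,\alpha_k))$ for each $i\in\intv{1,k}$.

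The heart of the argument is the observation recorded just before the statement: for any subset $S'\subseteq S$ the left-lcm $\lcm(S')$ exists (the system being of spherical type, $M(W)$ is Gaussian, so lcms of finite subsets of atoms exist) and lies in the parabolic submonoid generated by $S'$, whence the atoms right-dividing $\lcm(S')$ are precisely the elements of $S'$. I would apply this to the tails $S'=\{\alpha_i,\ldots,\alpha_k\}$ of a fixed increasing tuple: the atoms right-dividing $\lcm(\alpha_i,\ldots,\alpha_k)$ are exactly $\alpha_i,\ldots,\alpha_k$, and their $<$-least element is $\alpha_i$; therefore $\md(\lcm(\alpha_i,\ldots,\alpha_k))=\alpha_i$. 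The boundary case $i=k$ only uses that $\lcm(\alpha_k)=\alpha_k$ is itself an atom. This proves that every increasing tuple is a $k$-cell, and since $k$-cells are by definition such tuples, the two families coincide, giving $|X_k|=\binom{n}{k}$.

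Finally, for the order-independence: replacing $<$ by another linear order on $S$ only permutes the entries inside each tuple and changes which representative is deemed ``increasing'', but it leaves the underlying family of $k$-element subsets of $S$ — and hence the count $\binom{n}{k}$ — unchanged. I expect no genuine obstacle here beyond keeping the left/right divisibility conventions aligned with the definition of $\md$; the only step carrying real content is the cited fact that the atoms right-dividing $\lcm(S')$ are exactly $S'$, which itself rests on the parabolic submonoid generated by $S'$ being closed under right-division.
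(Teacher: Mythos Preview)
Your proposal is correct and follows exactly the approach the paper uses: the paragraph immediately preceding the lemma records the key fact that for any $S'\subseteq S$ the atoms right-dividing $\lcm(S')$ are precisely the elements of $S'$, and the lemma is then stated as a direct consequence. Your write-up simply makes explicit the application of this fact to each tail $\{\alpha_i,\ldots,\alpha_k\}$ and the order-independence, which the paper leaves to the reader.
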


This is an extreme case where the tools of subsection \ref{mini} don't apply. Indeed the lcm of two distinct atoms $s,t$ is always of the form $\ell=\langle s,t\rangle^{m_{s,t}}$, with $\Aa_\ell=\{s,t\}$. This means that the two bounds of Lemma \ref{4.1} are equal in this case.

The classical Artin monoid has the advantage of providing relatively few cells. Even for the largest case (which is $E_8\simeq G_{37}$), we have at most $\binom{8}{4}=70$ cells (in rank $4$). On the other hand, the differential is often long to compute because of recursion: elements of the form $\alpha_{/A}$ may have great length, because the simple elements in the Artin monoids can have length up to 120 in the case of $E_8$.

As seen in the tables below, the case of the Artin monoids covers the exceptional groups which are complexified real reflection groups:
\[B_{23}\simeq A(H_3),~B_{28}\simeq A(F_4),~B_{30}\simeq A(H_4),~B_{35}=A(E_6),~B_{36}=A(E_7),~B_{37}\simeq A(E_8)\]
Furthermore, some exceptional groups are known to be isodiscriminantal to complexified real reflection groups (see \cite[Theorem 2.25]{orliksoldiscri} and \cite[Section 2]{bannai}). Therefore the study of Artin monoids also gives the homology of the following groups 
\[B_4,~B_5,~B_6,~B_8,~B_9,~B_{10},~B_{14},~B_{16},~B_{17},~B_{18},~B_{20},~B_{21},~B_{25},~B_{26},~B_{32}\]

\subsection{Exceptional groups of rank two}
The next case we consider is that of exceptional groups of rank two. Some of these groups are known to be isodiscriminantal to complexified real groups, which we already considered in Section \ref{sec:coxeter_groups}. This leaves the following groups:
\[B_7,~B_{11},~B_{12},~B_{13},~B_{15},~B_{19},~B_{22}.\]
Among these, $B_7$, $B_{11}$ and $B_{19}$ are isodiscriminantal (see \cite[Section 2]{bannai}). In order to study these various groups, we use \emph{ad hoc} Garside monoids, which are all detailed in \cite[Examples 11, 12, 13]{thespicantin}. Most of these monoids are \emph{circular monoids}: 
\[B_7\simeq B_{11}\simeq B_{19}=\langle a,b,c~|~ abc=bca=cab\rangle\]
\[B_{12}=\langle a,b,c~|~ abca=bcab=cabc\rangle\]
\[B_{22}=\langle a,b,c~|~ abcab=bcabc=cabca\rangle\]
These are group presentation, which can also be seen as monoid presentation. We amalgamate such a group presentation and the underlying monoid presented by the same data. In these three monoids, one can see that the atoms play a symmetric role: changing the ordering on the atoms does not affect the number of cells. Furthermore, as the lcm of two distinct atoms is always the same, we get that there are only $1$-cells and $2$-cells ($n$ cells of rank $1$ and $n-1$ cells of rank 2 if $n$ is the number of atoms). 

For $B_{13}$, we use the following monoid:
\[B_{13}=\langle a,b,c~|~ acabc=bcaba,~bcab=cabc,~cabca=abcab\rangle.\]
Using the notations of Section \ref{mini}, we have
\[\ell_1:=b\vee c=bcab,~ \ell_2:=a\vee b=a\vee c=abcab\]
For $\ell_1$, we have $\Aa_{\ell_1}=\{b,c\}$ and $n(b,\ell_1)=n(c,\ell_1)=1$, so there is no use in setting either $b<c$ or $c<b$. For $\ell_2$, we have $\Aa_{\ell_2}=\{a,b,c\}$ and $n(a,\ell_2)=2,n(b,\ell_2)=n(c,\ell_2)=1$. So by considering the order $c<a<b$, we get one $1$-cell, two $2$-cells, and zero $3$-cells. If we instead consider the order $a<b<c$, we get three $2$-cells instead of two. 

Lastly, for $B_{15}$, we use the monoid
\[B_{15}=\left\langle a,b,c~|~ abc=bca,~cabcb=abcbc\right\rangle \]
We have
\[\ell_1=a\vee c=abc,~~\ell_2=b\vee c=b\vee a=abcbc\]
For $\ell_1$, we have $\Aa_{\ell_1}=\{a,c\}$ and $n(a,\ell_1)=n(c,\ell_1)=1$, so there is again no use in setting a priori that either $a<c$ or $c<a$. For $\ell_2$, we have $\Aa_{\ell_2}=\{a,b,c\}$ and $n(a,\ell_2)=n(c,\ell_2)=1$, $n(b,\ell_2)=2$. So we consider the order $c<a<b$ in order to get as few cells as possible.

\subsection{Well-generated exceptional groups}
At this point, we still have six groups to consider, and five of them are ``well-generated'' in the sense of \cite[Section 2]{beskpi1}:  
\[B_{24},~B_{27},~B_{29},~B_{33},~B_{34}.\]
The monoid we use for these groups is the \nit{dual braid monoid} (see \cite[Section 8]{beskpi1}). The main problem with these monoids is that they have many atoms, and so they give rise to relatively big complexes. This is where the methods of Section \ref{mini} are most useful. The bounds of Lemma \ref{4.1} for the number of $2$-cells are respectively given by

\spa
\begin{center}
\begin{tabular}{|c|ccccc|}
\hline & $B_{24}$ & $B_{27}$ & $B_{29}$ & $B_{33}$ & $B_{34}$\\
\hline Lower bound & 38 & 60 & 120 & 213 & 630 \\
Upper bound & 40 & 65 & 158 & 302 & 1071\\
\hline 
\end{tabular}
\end{center}

Applying the method of Section \ref{mini} to get a convenient order, we see in Table \ref{tab:comparedsizemarin} that the lower bounds are not always reached, but we obtain complexes which are quite smaller than the ones in \cite[Table 1]{homcomp2}, especially for large groups.

\begin{center}
\begin{table}[h]
\begin{tabular}{|c|lccccccc|}
\hline
& & 0-cells&1-cells & 2-cells&3-cells&4-cells & 5-cells&6-cells\\
\hline $B_{24}$ & \cite{homcomp2} & 1 & 14 & 38 & 25& & & \\
& Optimized & 1 & 14 & 38 & 25 & & & \\
\hline $B_{27}$ & \cite{homcomp2} & 1 & 20 & 62 & 43 & & & \\
& Optimized & 1 & 20 & 60 & 41 & & & \\
\hline $B_{29}$ & \cite{homcomp2} & 1 & 25 & 127 & 207 & 108 & & \\
& Optimized & 1 & 25 & 125 & 209 & 108 & & \\
\hline $B_{33}$ & \cite{homcomp2} & 1 & 30 & 226 & 638 & 740 & 299 & \\
& Optimized & 1 & 30 & 223 & 616 & 705 & 283 & \\
\hline $B_{34}$ & \cite{homcomp2} & 1 & 56 & 711 & 3448 & 7520 &7414 & 2686 \\
& Optimized & 1 & 56 & 646 & 2839 & 5691 & 5255 & 1812\\
\hline
\end{tabular}
\caption{Compared size of the Dehornoy-Lafont complexes}\label{tab:comparedsizemarin}
\end{table}
\end{center}

Sadly, although we obtain a smaller complex for $B_{34}$, it is not small enough to obtain the results that were missing in \cite{homcomp1} regarding $H_*(B_{34},\Q[t,t^{-1}])$. However, we were able to compute $H_*(B_{34},k[t,t^{-1}])$ where $k$ ranges among some finite fields. These computations give us a reasonable conjecture regarding $H_*(B_{34},\Q[t,t^{-1}])$.

\subsection{The Borchardt braid group $B_{31}$} The last exceptional group to consider is $B_{31}$. Although this groups does not appear (to our knowledge) as a group of fraction of some Gaussian monoid, it is equivalent to the enveloping groupoid of some Garside category. Indeed the complex reflection group $G_{31}$ appears as the centralizer of some regular element (in the sense of Springer) in the Coxeter group $E_8$. Following the work of Bessis in \cite[Section 11]{beskpi1}, this description gives rise to a Garside category $\Cc_{31}$, whose envelopping groupoid $\Bb_{31}$ is equivalent to $B_{31}$. A detailed description of this category can be found in \cite{springercat}.

This category admits $88$ objects, $660$ atoms, and a total of $2603$ simple morphisms (excluding the identities). The first possible approach used to study the homology of this category (see \cite[Section 5.3]{homcomp1}) was to construct the Charney-Meier-Whittlesey complex for this category (as defined in \cite[Section 7]{besgar}). Sadly this complex is too large to be dealt with without a strong computational power. 

We give in Table \ref{tab:comparedsizeborchardt} the size of the Charney-Meier-Whittlesey complex for $\Cc_{31}$ compared to the size of the Dehornoy-Lafont complex (note that in this case, the bounds given by Lemma \ref{4.1} are 1655 and 1845, respectively):

\begin{table}[h]
\begin{center}
\begin{tabular}{|c|ccccc|}
\hline & 0-cells & 1-cells & 2-cells& 3-cells & 4-cells \\
\hline CMW & 88 & 2603 & 11065 & 15300 & 6750\\
 DL & 88 & 660 & 1665 & 1735 & 642\\ \hline
\end{tabular}
\end{center}
\caption{Compared size of the complexes for $\Cc_{31}$}\label{tab:comparedsizeborchardt}
\end{table}

Let $M$ be one of the $B_{31}$ modules we are considering. We first extend $M$ to a $\Bb_{31}$ module, using the construction of Section \ref{cgg}. We then restrict this module to the category $\Cc_{31}$. In the case of $k[t,t^{-1}]$, the matrix we obtain may contain coefficients in $k[t,t^{-1}]\setminus k[t]$, as opposed to the case of a monoid, in which the action gives rise to matrices in $k[t]$. In theory this is not a problem since $k[t,t^{-1}]$ is a principal ideal domain. But in practice, it is far simpler to look for the Smith normal form of a matrix in $k[t]$. To avoid this issue, we multiply our matrices by a big enough power of $t$ (which is an invertible element in $k[t,t^{-1}]$) to obtain matrices in $k[t]$. We only then need to divide the elementary divisors we obtain by powers of $t$ if need be.

\subsection{Computational results} We use the notation $\Z_n$ for $\Z/n\Z$. The computations for the complexes and the differentials were made on the {\tt CHEVIE} package for {\tt GAP3} (\cite{gap3}). The computations of the Smith normal forms were made using the softwares {\tt Macaulay2} (\cite{macaulay2}) and {\tt MAGMA} (\cite{magma}).

For each row, we indicate the representative of the isodiscriminantality class of which we computed the homology.

The results in Table \ref{tab3} are not new. The case of complexified real reflection groups is already known from \cite{salvetti}; the case of complex reflection groups which are not isodiscriminantal to groups in the infinite series is given in \cite{homcomp1} and \cite{homcomp2}. We reproduce their results here for the convenience of the reader

\begin{table}[h]
\begin{center}
\begin{tabular}{|r|ccccccccc|}
\hline & $H_0$ & $H_1$ & $H_2$ & $H_3$ & $H_4$ & $H_5$ & $H_6$ & $H_7$ & $H_8$\\
 \hline $A_2\sim G_4$, $G_{8}$, $G_{16}$ & $\Z$ & $\Z$ & 0 &  &  & & & & \\
 $I_2(4) \sim G_5$, $G_{10}$, $G_{18}$ & $\Z$ & $\Z^2$ & $\Z$ &  &  & & & & \\
  $I_2(6) \sim G_6$, $G_9$, $G_{17}$ & $\Z$ & $\Z^2$ & $\Z$ &  &  & & & & \\
  $G_7$, $G_{11}$, $G_{19}$ & $\Z$ & $\Z^3$ & $\Z^2$ &  &  & & & & \\
  $G_{12}$ & $\Z$ & $\Z$ & 0 &  &  & & & & \\
  $G_{13}$ & $\Z$ & $\Z^2$ & $\Z$ & &  & & & & \\
  $I_2(8) \sim G_{14}$ & $\Z$ & $\Z^2$ & $\Z$ &  &  & & & & \\
  $G_{15}$ & $\Z$ & $\Z^3$ & $\Z^2$ &  &  & & & & \\
  $I_2(5) \sim G_{20}$ & $\Z$ & $\Z$ & $0$ &  &  & & & & \\
  $I_2(10) \sim G_{21}$ & $\Z$ & $\Z^2$ & $\Z$ &  &  & & & & \\
  $G_{22}$ & $\Z$ & $\Z$ & $0$ &  &  & & & & \\
  $G_{23}=H_3$ & $\Z$ & $\Z$ & $\Z$ & $\Z$ &  & & & & \\
  $G_{24}$ & $\Z$ & $\Z$ & $\Z$ & $\Z$ &  & & & & \\
  $A_3\sim G_{25}$ & $\Z$ & $\Z$ & $\Z_2$ & $0$ &  & & & & \\
  $B_3\sim G_{26}$ & $\Z$ & $\Z^2$ & $\Z^2$ & $\Z$ &  & & & & \\
  $G_{27}$ & $\Z$ & $\Z$ & $\Z_3\times \Z$ & $\Z$ &  & & & & \\
  $G_{28}= F_4$ & $\Z$ & $\Z^2$ & $\Z^2$ & $\Z^2$ & $\Z$ & & & & \\
  $G_{29}$ & $\Z$ & $\Z$ & $\Z_2\times \Z_4$ & $\Z_2\times \Z$ & $\Z$  & & & & \\
  $G_{30}= H_4$ & $\Z$ & $\Z$ & $\Z_2$ & $\Z$ & $\Z$ & & & & \\
  $G_{31}$ & $\Z$ & $\Z$ & $\Z_6$ & $\Z$ & $\Z$ & & & & \\
  $A_4\sim G_{32}$ & $\Z$ & $\Z$ & $\Z_2$ & $0$ & $\Z$ & & & & \\
  $G_{33}$ & $\Z$ & $\Z$ & $\Z_6$ & $\Z_6$ & $\Z$ & $\Z$ & & & \\
  $G_{34}$ & $\Z$ & $\Z$ & $\Z_6$ & $\Z_6$ & $\Z_3^2\times \Z_6$ & $\Z_3^2\times \Z$& $\Z$ & & \\
  $G_{35}=E_6$ & $\Z$ & $\Z$ & $\Z_2$ & $\Z_2$ & $\Z_6$ & $\Z_3$ & $0$ & & \\
  $G_{36}=E_7$ & $\Z$ & $\Z$ & $\Z_2$ & $\Z_2^2$ & $\Z_6^2$ & $\Z_3\times \Z_6$ & $\Z$ & $\Z$& \\
  $G_{37}=E_8$ & $\Z$ & $\Z$ & $\Z_2$ & $\Z_2$ & $\Z_2\times \Z_6$ & $\Z_3\times \Z_6$ &$\Z_2\times \Z_6$ & $\Z$& $\Z$\\
 \hline 
\end{tabular}
\end{center}
\caption{Homology of exceptional braid groups in $\Z$ (after Salvetti, Callegaro and Marin)\label{tab3}}
\end{table}

\newpage
In the same vein, Table \ref{tab4} gives the homology of exceptional braid groups with coefficients in the sign representation. This homology has already been studied in \cite{salvetti} for complexified real reflection groups and in \cite{homcomp2} for the exceptional groups  $B_{12}, B_{13}, B_{22}, B_{24}, B_{27},$ $B_{29},$  $B_{33}, B_{34}$. The first homology groups was studied for all complex reflection group in \cite[Section 7.2]{homcomp1}. We restate the results of \cite[Table 2]{salvetti} and \cite[Table 3]{homcomp2} among the results for other exceptional groups (we frame the results which are new).


\begin{table}[h]
\begin{center}
\begin{tabular}{|r|ccccccccc|}
\hline & $H_0$ & $H_1$ & $H_2$ & $H_3$ & $H_4$ & $H_5$ & $H_6$ & $H_7$ & $H_8$\\
 \hline $A_2\sim G_4$, $G_{8}$, $G_{16}$ & $\Z_2$ & $\Z_3$ & 0 &  &  & & & & \\
 $I_2(4)\sim G_5$, $G_{10}$, $G_{18}$ & $\Z_2$ & $\Z_4$ & 0 &  &  & & & & \\
  $I_2(6) \sim G_6$, $G_9$, $G_{17}$ & $\Z_2$ & $\Z_6$ & 0 &  &  & & & & \\
  $G_7$, $G_{11}$, $G_{19}$ & $\Z_2$ & $\Z_2^2$ & \fbox{0} &  &  & & & & \\
  $G_{12}$ & $\Z_2$ & $\Z_3$ & $0$ &  &  & & & & \\
  $G_{13}$ & $\Z_2$ & $\Z_2$ & $0$ &  &  & & & & \\
  $I_2(8)\sim G_{14}$ & $\Z_2$ & $\Z_8$ & $0$ &  &  & & & & \\
  $G_{15}$ & $\Z_2$ & $\Z_2^2$ & \fbox{$0$} &  &  & & & & \\
  $I_2(5)\sim G_{20}$ & $\Z_2$ & $\Z_5$ & $0$ &  &  & & & & \\
  $I_2(10) \sim G_{21}$ & $\Z_2$ & $\Z_{10}$ & $0$ &  &  & & & & \\
  $G_{22}$ & $\Z_2$ & $0$ & $0$ &  &  & & & & \\
  $G_{23}=H_3$ & $\Z_2$ & $0$ & $\Z_2$ & 0 &  & & & & \\
  $G_{24}$ & $\Z_2$ & 0 & $\Z_2$ & 0 &  & & & & \\
  $A_3\sim G_{25}$ & $\Z_2$ & $\Z_3$ & $\Z_2$ & $0$ &  & & & & \\
  $B_3\sim G_{26}$ & $\Z_2$ & $\Z_2$ & $\Z_2$ & $0$ &  & & & & \\
  $G_{27}$ & $\Z_2$ & 0 & $\Z_2$ & 0 &  & & & & \\
  $G_{28}=F_4$ & $\Z_2$ & $\Z_2$ & $\Z_6$ & $\Z_{24}$ & 0 & & & & \\
  $G_{29}$ & $\Z_2$ & 0 & $\Z_2\times\Z_4$ & $\Z_2\times \Z_{40}$ & 0 & & & & \\
  $G_{30}=H_4$ & $\Z_2$ & $0$ & $\Z_2$ & $\Z_{120}$ & $0$ & & & & \\
  $G_{31}$ & $\Z_2$ & 0 & \fbox{$\Z_6$} & \fbox{$\Z_{20}$} & \fbox{$0$} & & & & \\
  $A_4 \sim G_{32}$ & $\Z_2$ & $0$ & $\Z_2$ & $\Z_5$ & $0$ & & & & \\
  $G_{33}$ & $\Z_2$ & 0 & $\Z_2$ & $\Z_2$ & $\Z_2$ & 0 & & & \\
  $G_{34}$ & $\Z_2$ & 0 & $\Z_6$ & $\Z_2$ &$\Z_6$  & $\Z_{252}$& 0 & & \\
  $G_{35}=E_6$ & $\Z_2$ & $0$ & $\Z_2$ & $\Z_2$ & $\Z_2$ &$\Z_9$ &$0$ & & \\
  $G_{36}=E_7$ & $\Z_2$ & $0$ & $\Z_2$ & $\Z_2^2$ & $\Z_2^2$ & $\Z_2$ &$\Z_2$ & $\Z$ & \\
  $G_{37}=E_8$ & $\Z_2$ & $0$ & $\Z_2$ & $\Z_2$ & $\Z_2^2$ & $\Z_2$ & $\Z_2^2$ &$\Z_{240}$ &0 \\
 \hline 
\end{tabular}
\end{center}
\caption{Homology of exceptional braid groups in $\Z_\epsi$ (framed results are new).}\label{tab4}
\end{table}

Lastly, we give in Table \ref{tab5} the homology with coefficients in the representation $k[t,t^{-1}]$. In the case $k=\Q$, this was already studied in \cite{salvetti} for complexified real exceptional reflection groups; in \cite{conciniprocesisalvetti} for real reflection groups of type $A$; in \cite{arithmeticpropofcohomofartingroups} for real reflection groups of type $B$; and in \cite{homcomp2} for the exceptional groups $B_{12}, B_{13}, B_{22}, B_{24}, B_{27}, B_{29}, B_{33},$ $B_{34}$, although the results are incomplete for this last group.  We let $\Phi_n \in \Z[t]$ denote the $n$-th cyclotomic polynomial. In the Table, for each $P\in \Q[t,t^{-1}]$, the presence of $P$ in the Table symbolizes the $\Q[t,t^{-1}]$ module $\Q[t,t^{-1}]/(P)$, and $\Q$ is a shortcut for $\Q[t,t^{-1}]/(t-1)$.

Note that our apparent results regarding the groups studied in \cite{homcomp2} differ from \cite[Table 4]{homcomp2} because of a slight mistake in the latter: for all groups except $B_{13}$, the results should be ``shifted to the left'', for instance $\Phi_6\oplus \Phi_{12}$ is not $H_2(B_{12},\Q[t,t^{-1}])$, but rather $H_1(B_{12},\Q[t,t^{-1}])$.


\newpage
\begin{small}
\begin{landscape}
\begin{table}[h]
\begin{center}
\begin{tabular}{|r|ccccccccc|}
\hline & $H_0$ & $H_1$ & $H_2$ & $H_3$ & $H_4$ & $H_5$ & $H_6$ & $H_7$ & $H_8$\\
 \hline $A_2\sim G_4$, $G_{8}$, $G_{16}$ & $\Q$ & $\Phi_6$ & 0 &  &  & & & & \\
 $I_2(4) \sim G_5$, $G_{10}$, $G_{18}$ & $\Q$ & $\Phi_1\Phi_4$ & 0 &  &  & & & & \\
  $I_2(6)\sim G_6$, $G_9$, $G_{17}$ & $\Q$ & $\frac{t^6-1}{t+1}$ & 0 &  &  & & & & \\
  $G_7$, $G_{11}$, $G_{19}$ & $\Q$ & \fbox{$\Q\oplus (t^3-1)$} & \fbox{0} &  &  & & & & \\
  $G_{12}$ & $\Q$ & $\Phi_6\Phi_{12}$ & 0 &  &  & & & & \\
  $G_{13}$ & $\Q$ & $\Phi_1\Phi_9$ & $0$ &  &  & & & & \\
  $I_2(8) \sim G_{14}$ & $\Q$ & $\frac{t^8-1}{t+1}$ & 0 &  &  & & & & \\
  $G_{15}$ & $\Q$ & \fbox{$\Q\oplus t^5-1$} & \fbox{0} &  &  & & & & \\
  $I_2(5) \sim G_{20}$ & $\Q$ & $\Phi_{10}$ & 0 &  &  & & & & \\
  $I_2(10) \sim G_{21}$ & $\Q$ & $\frac{t^{10}-1}{t+1}$ & $0$ &  &  & & & & \\
  $G_{22}$ & $\Q$ & $\Phi_{15}$ & 0 &  &  & & & & \\
  $G_{23}=H_3$ & $\Q$ & $0$ & $\frac{t^5-1}{t-1} \Phi_3$ & 0 &  & & & & \\
  $G_{24}$ & $\Q$ & 0 & $\Phi_1\Phi_3\Phi_7$ & 0 &  & & & & \\
  $A_3\sim G_{25}$ & $\Q$ & $\Phi_6$ & $\Phi_4$ & $0$ &  & & & & \\
  $B_3\sim G_{26}$ & $\Q$ & $\Q$ & $t^3-1$ & $0$ &  & & & & \\
  $G_{27}$ & $\Q$ & 0 & $(t^{15}-1)\oplus \Phi_3$ & 0 &  & & & & \\
  $G_{28}=F_4$ & $\Q$ & $\Q$ & $\frac{t^6-1}{t+1}$ & $\frac{t^{12}-1}{t+1}\Phi_8$ & 0 & & & & \\
  $G_{29}$ & $\Q$ & $0$ & $\Phi_4\oplus \Phi_4$ & $\frac{t^{20}-1}{t+1}\oplus \Phi_4$ & 0 & & & & \\
  $G_{30}=H_4$ & $\Q$ & $0$ & $0$ & $\frac{t^{30}-1}{t+1}\Phi_4\Phi_{12}\Phi_{20}$ & 0 & & & & \\
  $G_{31}$ & $\Q$ & \fbox{0} & \fbox{$\Phi_6$} & \fbox{$\frac{t^{10}-1}{t+1} \Phi_{15}$} & \fbox{$0$} & & & & \\
  $A_4\sim G_{32}$ & $\Q$ & 0 & $\Phi_4$ & $\Phi_{10}$& 0 & & & &\\
  $G_{33}$ & $\Q$ & 0 & 0 & 0 & $(t^9-1)\Phi_5$ & 0 & & & \\
  $G_{34}$ & $\Q$ & 0 & $\Phi_6$ & ? & ? & ? & ? & & \\
  $G_{35}=E_6$ & $\Q$ & 0  & 0 & 0 & $\Phi_3\Phi_8$ & $\frac{t^{12}-1}{t^4-1}\Phi_{18}$& 0 & & \\
  $G_{36}=E_7$ & $\Q$ & $0$ & $0$ & $0$ & $\Phi_3$ & $\Phi_3$ & $(t^9-1)\Phi_7$& $0$& \\
  $G_{37}=E_8$ & $\Q$ & 0 & 0 & 0 & $\Phi_4$ & 0&$\Phi_{8}\Phi_{12}$ & $\frac{t^{30}-1}{t+1} \frac{t^{24}-1}{t^6-1} \Phi_{20}$&0 \\
 \hline 
\end{tabular}
\end{center}
\caption{Homology of exceptional braid groups in $\Q[t,t^{-1}]$ (framed results are new).}\label{tab5}
\end{table}

\end{landscape}
\end{small}
\newpage 

We finish with the case of a finite field $k$. Following \cite{homcomp2}, we restrict our attention to the case $k=\F_p$ with $p\in \{2,3,5,7\}$. We denote by $\phi_{n,k}$ the $n$-th cyclotomic polynomial with coefficients in $k$. As $\Phi_{n}:=\phi_{n,\Q}$ lies in $\Z[X]$, we can consider its image in $\F_p[X]$ for some prime $p$. We also denote this polynomial by $\Phi_{n}$. It is well known that $\Phi_{n}=\phi_{n,\F_p}$ mod $p$ if $p$ does not divide $n$. Furthermore, if $p$ does not divide $n$, then we have 
\[\forall r>0,~\Phi_{np^r}\equiv (\Phi_{n})^{p^r-p^{r-1}}=(\phi_{n,\F_p})^{p^r-p^{r-1}}\text{ mod }p\]
as stated in \cite{cyclo}. Most of the time, the homology $H_*(B_i,\F_p[t,t^{-1}])$ is given by the same polynomials as $H_*(B_i,\Q[t,t^{-1}])$, reduced mod $p$. For instance we have, in the case of $B_{12}$: 
\[H_1(B_{12},\F_2[t,t^{-1}])=\F_2[t,t^{-1}]/(P)\]
Where
\[P(X)=X^6-X^5+X^3-X+1=(X^2+X+1)^3= \Phi_{6}(X)\Phi_{12}(X)\equiv \phi_{3,\F_2}(X)^3\text{ mod }2\]
We list here the cases where $H_*(B_i,\F_p[t,t^{-1}])$ is not given by the reduction modulo $p$ of the polynomials giving $H_*(B_i,\Q[t,t^{-1}])$. We first consider the case $W\neq G_{34}$. The results for $W\in \{G_{24},G_{29},G_{33}\}$ already appear in \cite{homcomp2}.

\begin{enumerate}[$\bullet$]
\item When $W=G_{29}$, we have \begin{enumerate}[-]
\item $H_3(B_{29},\F_2[t,t^{-1}])=(t+1)^3\oplus \Phi_4$.
\item $H_4(B_{29},\F_2[t,t^{-1}])=(t^{20}-1)\oplus \Phi_4$.
\end{enumerate}
\item When $W=G_{30}$, we have $H_3(B_{30},\F_2[t,t^{-1}])=(t^{30}-1)\Phi_4\Phi_{12}\Phi_{20}$.
\item When $W=G_{31}$, we have \begin{enumerate}[-]
\item $H_2(B_{31},\F_2[t,t^{-1}])=\Phi_1\Phi_6$ and $H_2(B_{31},\F_3[t,t^{-1}])=\Phi_1\Phi_6$.
\item $H_3(B_{31},\F_2[t,t^{-1}])=(t^{10}-1)\Phi_{15}$ and $H_3(B_{31},\F_3[t,t^{-1}])=\frac{t-1}{t+1}(t^{10}-1)\Phi_{15}$.
\end{enumerate}
\item When $W=G_{33}$, we have \begin{enumerate}[-]
\item $H_{2}(B_{33},\F_2[t,t^{-1}])=\Phi_1$ and $H_2(B_{33},\F_3[t,t^{-1}])=\Phi_1$.
\item $H_{3}(B_{33},\F_2[t,t^{-1}])=\Phi_1$ and $H_3(B_{33},\F_3[t,t^{-1}])=\Phi_1$.
\end{enumerate}
\item When $W=G_{35}$, we have $H_2(B_{35},\F_2[t,t^{-1}])=\Phi_1$ and $H_3(B_{35},\F_2[t,t^{-1}])=\Phi_1$.
\item When $W=G_{36}$, we have \begin{enumerate}[-]
\item $H_2(B_{36},\F_2[t,t^{-1}])=\Phi_1$.
\item $H_3(B_{36},\F_2[t,t^{-1}])=\Phi_1\oplus \Phi_1$.
\item $H_4(B_{36},\F_2[t,t^{-1}])=\Phi_1\oplus (t^3+1)$ and $H_4(B_{36},\F_3[t,t^{-1}])=t^3-1$.
\item $H_5(B_{36},\F_2[t,t^{-1}])=t^3+1$, and $H_5(B_{36},\F_3[t,t^{-1}])=t^3-1$.
\end{enumerate}
\item When $W=G_{37}$, we have \begin{enumerate}[-]
\item $H_2(B_{37},\F_2[t,t^{-1}])=\Phi_1$.
\item $H_3(B_{37},\F_2[t,t^{-1}])=\Phi_1$.
\item $H_4(B_{37},\F_2[t,t^{-1}])=\Phi_1\Phi_4$ and $H_4(B_{37},\F_3[t,t^{-1}])=\Phi_1\Phi_4$.
\item $H_5(B_{37},\F_2[t,t^{-1}])=\Phi_1$ and $H_5(B_{37},\F_3[t,t^{-1}])=\Phi_1\oplus \Phi_1$.
\item $H_6(B_{37},\F_2[t,t^{-1}])=\Phi_1\Phi_8\Phi_{12}$ and $H_6(B_{37},\F_3[t,t^{-1}])=\Phi_1\Phi_8\Phi_{12}$.
\item $H_7(B_{37},\F_2[t,t^{-1}])=\frac{(t^{30}-1)(t^{24}-1)}{(t^6-1)}\Phi_{20}$.
\end{enumerate}
\end{enumerate}

We notice that, in all of these cases, the homology $H_*(B_i,\F_p[t,t^{-1}])$ for $p\in \{5,7\}$ is given by the reduction modulo $p$ of the polynomials giving $H_*(B_i,\Q[t,t^{-1}])$. This allows us to give a conjecture about the values of $H_*(B_{34},\Q[t,t^{-1}])$ which we were not able to compute.

The homology $H_*(B_{34},\F_p[t,t^{-1}])$ for $p\in \{2,3\}$ was computed in \cite{homcomp2}. The results are in Table \ref{tab:B34F2F3}.

\begin{table}[h]
\begin{center}
\begin{tabular}{|r|ccccccc|}
\hline $G_{34}$& $H_0$ & $H_1$ & $H_2$ & $H_3$ & $H_4$ & $H_5$ & $H_6$ \\
\hline $\F_2[t,t^{-1}]$  & $\F_2$ & 0 & $(t^3-1)$ & $\F_2$ & $\Phi_{3}\oplus \Phi_3 \oplus (t^3-1)$ & $ {(t^{42}-1)}\oplus \Phi_3\oplus \Phi_3$ & 0 \\
 $\F_3[t,t^{-1}]$  & $\F_3$ & 0 & $\F_3\oplus \Phi_6$ & $\F_3$ & $\Phi_{3}\oplus \Phi_3 \oplus \Phi_3\oplus \Phi_2$ & $ {(t^{42}-1)}\oplus \Phi_3\oplus \Phi_3$ & 0 \\
  \hline 
\end{tabular}
\end{center}
\caption{Homology of $B_{34}$ with coefficients in $\F_2[t,t^{-1}]$ and $\F_3[t,t^{-1}]$ (after Marin).}\label{tab:B34F2F3}
\end{table}

We were able to compute the homology $H_*(B_{34},\F_p[t,t^{-1}])$ for all primes $5\leqslant p\leqslant 97$. For each of these cases, we see in Table \ref{tab:B34Fp} that the homology is given by the same polynomials:
\begin{table}[h]
\begin{center}
\begin{tabular}{|r|ccccccc|}
\hline $G_{34}$& $H_0$ & $H_1$ & $H_2$ & $H_3$ & $H_4$ & $H_5$ & $H_6$ \\
\hline $\F_p[t,t^{-1}]$  & $\F_p$ & 0 & $\Phi_6$ & $0$ & $\Phi_{3}\oplus \Phi_3 \oplus \Phi_3$ & $ \frac{(t^{42}-1)}{t+1}\oplus \Phi_3\oplus \Phi_3$ & 0 \\
  \hline 
\end{tabular}
\end{center}
\caption{Homology of $B_{34}$ with coefficients in $\F_p[t,t^{-1}]$ (for primes $p$ between $5$ and $97$).}\label{tab:B34Fp}
\end{table}

We obtain a conjecture about $H_*(B_{34},\Q[t,t^{-1}])$, which we state in Table \ref{tab:conj}.

\begin{table}[h]
\begin{center}
\begin{tabular}{|r|cccc|}
\hline $G_{34}$& $H_3$ & $H_4$ & $H_5$ & $H_6$ \\
\hline $\Q[t,t^{-1}]$& 0 & $\Phi_{3}\oplus\Phi_{3}\oplus \Phi_{3}$ & $\frac{t^{42}-1}{t+1}\oplus \Phi_{3}\oplus \Phi_3$  & 0 \\
  \hline 
\end{tabular}
\end{center}
\caption{Conjectural homology of $B_{34}$ with coefficients in $\Q[t,t^{-1}]$.}\label{tab:conj}
\end{table}

\printbibliography
\end{document}